\newcommand{\executeiffilenewer}[3]{%
\ifnum\pdfstrcmp{\pdffilemoddate{#1}}%
{\pdffilemoddate{#2}}>0%
{\immediate\write18{#3}}\fi%
}
\newcommand{%
\executeiffilenewer{.svg}{.pdf}%
{inkscape -z -D --file=.svg %
--export-pdf=.pdf --export-latex}%
\input{.pdf_tex}%
}[1]{%
\executeiffilenewer{#1.svg}{#1.pdf}%
{inkscape -z -D --file=#1.svg %
--export-pdf=#1.pdf --export-latex}%
\input{#1.pdf_tex}%
}
\newtheorem{theo}{Theorem}[section]
\newtheorem{lema}[theo]{Lemma}
\newtheorem{corollari}[theo]{Corollary}
\newtheorem{propo}[theo]{Proposition}
\DeclareMathOperator{\dist}{dist}
\DeclareMathOperator{\tr}{tr}
\DeclareMathOperator{\som}{sum}
\DeclareMathOperator{\spec}{sp}
\def\vec0{\mbox{\boldmath $0$}}
\def\A{\mbox{\boldmath $A$}}
\def\I{\mbox{\boldmath $I$}}
\def\J{\mbox{\boldmath $J$}}
\def\M{\mbox{\boldmath $M$}}
\def\N{\mbox{\boldmath $N$}}
\def\R{\mbox{\boldmath $R$}}
\def\Re{\mathbb R}
\begin{document}
\title{Some spectral and quasi-spectral \\
characterizations of distance-regular graphs
}

\author{A. Abiad$^{a,b}$, E.R. van Dam$^a$, M.A. Fiol$^c$
\\ \\
{\small $^a$Tilburg University, Dept. of Econometrics and O.R.} \\
{\small  Tilburg, The Netherlands}\\
{\small {\tt Edwin.vanDam@uvt.nl}} \\
{\small $^b$Maastricht University, Dept. of Quantitative Economics} \\
{\small  Maastricht, The Netherlands}\\
{\small {\tt A.AbiadMonge@maastrichtuniversity.nl}} \\
{\small $^c$Universitat Polit\`ecnica de Catalunya, Dept. de Matem\`atiques} \\
{\small Barcelona Graduate School of Mathematics, Barcelona, Catalonia}\\
{\small {\tt fiol@ma4.upc.edu}} \\
 }
\date{}

\maketitle

\begin{abstract}
In this paper we consider the concept of preintersection numbers of a graph. These numbers are determined by the spectrum of the adjacency matrix of the graph, and generalize the intersection numbers of a distance-regular graph. By using the preintersection numbers we give some new spectral and quasi-spectral characterizations of distance-regularity, in particular for graphs with large girth or large odd-girth.
\end{abstract}

{\small \noindent{\em Mathematics Subject Classifications:} 05E30, 05C50.

\noindent{\em Keywords:} Distance-regular graph; Eigenvalues; Girth; Odd-girth; Preintersection numbers.}

\section{Introduction}
\label{intro}
A central issue in spectral graph theory is to question whether or not a graph is uniquely determined by its spectrum, see the surveys of Van Dam and Haemers \cite{vdh03,vdh09}. In particular, much
attention has been paid to give spectral and quasi-spectral characterizations of distance-regularity.
Contributions in this area are due to Brouwer and Haemers \cite{bh93}, Van Dam and  Haemers \cite{vdh02},
Van Dam, Haemers, Koolen, and Spence \cite{vdhks06},  Haemers \cite{h96}, and Huang and Liu \cite{hl99},
among others.

In this paper, we will give new spectral and quasi-spectral characterizations of distance-regularity of a graph $G$ without requiring, as it is common in this area of research, that:
\begin{itemize}
\item
$G$ is cospectral with a distance-regular graph $\Gamma$, where
\item
$\Gamma$ has intersection numbers, or other combinatorial parameters that satisfy certain properties.
\end{itemize}

The following theorem, given in the recent survey by Van Dam, Koolen, and Tanaka \cite{vdkt12} (see also Van Dam and Haemers \cite{vdh03} and Brouwer and Haemers \cite{bh12}), contains most of the known characterizations of this type.

\begin{theo}
\label{theo-book} If $\Gamma$ is a distance-regular graph with
diameter $D=d$ and girth $g$ satisfying one of the
properties $(i)$--$(v)$, then every graph $G$ cospectral with $\Gamma$ is
also distance-regular and has the same intersection numbers
as $\Gamma$.
\begin{itemize}
\item[$(i)$]
 $g \ge 2d-1$,
\item[$(ii)$]
$g \ge 2d-2$ and $\Gamma$ is bipartite,
\item[$(iii)$]
$g \ge 2d-2$ and $c_{d-1}c_d < -(c_{d-1}+1)(\lambda_1+\cdots+\lambda_d)$,
\item[$(iv)$]
$\Gamma$ is a generalized Odd graph, i.e., $a_1 = \cdots = a_{d-1} = 0$, $a_d \neq 0$,
\item[$(v)$]
$c_1 = \cdots = c_{d-1} = 1$.
\end{itemize}
\end{theo}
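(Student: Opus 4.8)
The plan is to treat all five cases within a single framework, exploiting the fact that cospectrality transfers every spectrally determined quantity from $\Gamma$ to $G$. First I would record the shared data: $G$ and $\Gamma$ have the same order $n$, the same distinct eigenvalues $\lambda_0 > \lambda_1 > \cdots > \lambda_d$ with the same multiplicities, and therefore the same predistance polynomials $p_0, \ldots, p_d$ and the same preintersection numbers. Since $G$ is cospectral with the regular graph $\Gamma$, it is regular of degree $k = \lambda_0$; since $\lambda_0$ is then simple, $G$ is connected; and since there are $d+1$ distinct eigenvalues, $G$ has diameter at most $d$. Because $\Gamma$ is distance-regular its preintersection numbers coincide with its intersection numbers $c_i, a_i, b_i$, so $G$ already carries these exact values as its preintersection numbers; hence it suffices to prove that $G$ is distance-regular, since for a distance-regular graph the intersection numbers equal the (spectrally determined) preintersection numbers and so would automatically match those of $\Gamma$. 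By the spectral excess theorem --- equivalently, by showing that every distance matrix $A_i$ is a polynomial in $A$ --- this reduces to proving that the average number of vertices at maximal distance $d$ equals the spectral excess $p_d(\lambda_0)$.

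The engine in every case is the girth. The number of closed walks of length $\ell$ is $\tr(A^\ell)$, which is spectrally determined; for $\ell$ below the girth such walks are forced to be tree-like and their count depends only on $k$, so the girth and the short-cycle counts are spectral invariants and $G$ has girth $g$ as well. The structural payoff is local: if the girth exceeds $2i$, then the ball of radius $i$ about any vertex is a tree, so two vertices at distance $i$ are joined by a unique shortest path and $(A^i)_{uv} = 1$ there, while $(A^i)_{uv} = 0$ whenever $\dist(u,v) > i$. Consequently $A^i = A_i + (\text{a combination of } A_0, \ldots, A_{i-1})$, and inductively each $A_i$ with $2i < g$ is a polynomial in $A$. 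This is the mechanism that turns a girth hypothesis into control of the distance matrices through powers of $A$, which the spectrum already controls.

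Now I would split according to how far the girth reaches. In case $(i)$, $g \ge 2d-1$ gives unique shortest paths for every distance up to $d-1$, so $A_0, \ldots, A_{d-1}$ are polynomials in $A$; since $J$ is the Hoffman polynomial of the connected regular graph $G$, the remaining matrix $A_d = J - (A_0 + \cdots + A_{d-1})$ is a polynomial in $A$ too, and $G$ is distance-regular. Case $(v)$, with $c_1 = \cdots = c_{d-1} = 1$, is the same unique-shortest-path situation in disguise and is handled identically. In cases $(ii)$ and $(iii)$ the girth only reaches $g \ge 2d-2$, so the argument pins down $A_0, \ldots, A_{d-2}$ but leaves the sum $A_{d-1} + A_d = J - (A_0 + \cdots + A_{d-2})$ undecided; here the auxiliary hypothesis must supply the one missing relation separating the last two classes --- the symmetry of the spectrum (bipartiteness, itself a spectral invariant) in $(ii)$, and the eigenvalue-sum inequality $c_{d-1}c_d < -(c_{d-1}+1)(\lambda_1 + \cdots + \lambda_d)$ in $(iii)$. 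Case $(iv)$ is the odd-girth analogue: $a_1 = \cdots = a_{d-1} = 0$ is a large-odd-girth condition, again spectrally transferred to $G$, which rigidifies the odd part of the distance partition.

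The main obstacle is exactly the deficit of one in the girth in cases $(ii)$ and $(iii)$: there a single power of $A$ no longer separates the two outermost distance classes, so closed-walk counting alone cannot finish the job. The delicate point is to verify that the auxiliary hypothesis furnishes precisely the missing linear relation --- a parity argument on the symmetric spectrum for the bipartite case, and a sign/inequality argument for $(iii)$ showing that the only alternative to distance-regularity would push the spectral excess past the stated bound. Before any of this can be invoked, one also has to confirm that each auxiliary condition is itself preserved under cospectrality, so that it genuinely passes from $\Gamma$ to $G$; this is immediate for bipartiteness and for the eigenvalue sum, but it is the kind of check that must not be skipped.
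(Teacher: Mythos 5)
First, a point of comparison: the paper does not actually prove Theorem~\ref{theo-book}. It is a survey statement whose five cases are cited from Brouwer--Haemers, Van Dam--Haemers, and Huang--Liu; the paper's own contribution is to prove \emph{generalizations} of these cases (Theorems~\ref{theo-girth}, \ref{theo-girth+}, \ref{thm:variationoddgirththeorem}, and~\ref{gamma=1}) using partial distance-regularity and the spectral-excess-type inequality of Proposition~\ref{theo:excesspdr}. Your overall framework --- cospectrality transfers regularity, connectedness, girth, and the preintersection numbers, which for $\Gamma$ coincide with its intersection numbers, so it suffices to show $G$ is distance-regular --- is correct and matches the paper's philosophy, and your treatment of case $(i)$ is sound. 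Your sketch of case $(ii)$ (parity separation of $A_{d-1}+A_d$ in a bipartite regular graph) is also plausible as written.

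The genuine gaps are in cases $(iii)$--$(v)$. Most concretely, your claim that case $(v)$ is ``the same unique-shortest-path situation in disguise and is handled identically'' to case $(i)$ is false: $(v)$ carries no girth hypothesis. The line graph of the Petersen graph is distance-regular with $d=3$, $c_1=c_2=1$, $a_1=1$, and girth $3$, so for a cospectral mate $G$ the closed-walk/girth mechanism pins down nothing beyond $A_1$. The argument that actually works (and is how Theorem~\ref{gamma=1} generalizes this case) is of a different nature: $\gamma_m=1$ gives $\overline{c}_m\ge 1=\gamma_m$ for free, and a Cauchy--Schwarz/spectral-excess inequality as in Proposition~\ref{theo:excesspdr} then forces $p_m(\A)=\A_m$ step by step. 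Case $(iv)$ is waved away entirely; ``rigidifies the odd part of the distance partition'' is not an argument, and the Huang--Liu theorem requires substantive work on the outermost distance classes. For case $(iii)$, the ``sign/inequality argument'' you allude to must be the specific identity $(\lambda_1+\cdots+\lambda_d)\,c_{d-1}(u,v)+\gamma_{d-1}\gamma_d=(A^d)_{uv}\ge 0$ obtained by evaluating the Hoffman polynomial at a pair $u,v$ with $\dist(u,v)=d-1$ (as in the proof of Theorem~\ref{theo-girth+}); this is what converts the hypothesis into the pointwise bound $c_{d-1}(u,v)\le\gamma_{d-1}$ that separates the last two classes, and without identifying it the case is not proved.
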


Here we show that the same conclusions can be obtained within the following much more general setting:
\begin{itemize}
\item $G$ has preintersection numbers satisfying certain properties.
\end{itemize}

More precisely, in Theorem \ref{theo-girth} we generalize the cases $(i)$ and $(ii)$ of Theorem \ref{theo-book}. A refinement of $(iii)$ is given in Theorem \ref{theo-girth+}. Moreover, our Theorem \ref{thm:variationoddgirththeorem} provides an alternative formulation of the so-called odd-girth theorem, which is a generalization of $(iv)$. Finally, Theorem \ref{gamma=1} generalizes the case $(v)$.

Our work is motivated by earlier work in this direction, in particular by the odd-girth theorem \cite{vdh11}. This result states that a graph with $d+1$ distinct eigenvalues and odd-girth $2d+1$ is distance-regular. We recall that the odd-girth of a graph is the length of the shortest odd cycle in the graph, and that the odd-girth follows from the spectrum of the graph.

In order to obtain our results we mainly use the theory on so-called almost distance-regular graphs given by Dalf\'{o}, Van Dam, Fiol, Garriga, and Gorissen \cite{ddfgg11}. Another important ingredient of our work is a new inequality (Proposition \ref{theo:excesspdr}) for partially distance-regular graphs that is inspired by Fiol and Garriga's spectral excess theorem \cite{fg97} (for short proofs, see \cite{vd08,fgg10}). The spectral excess theorem  states that, if for every vertex $u$, the number of vertices at (maximum) distance $d$ from $u$ is the same as the so-called spectral excess (which can be expressed in terms of the spectrum), then the graph is distance-regular.

The paper is organized as follows. In Section \ref{prelim}, we give some basic background information. In Section \ref{sec:prepol} we present a few lemmas about properties of the predistance polynomials and preintersection numbers.
These parameters are used in Section \ref{sec-new} to prove
the above-mentioned new characterizations of distance-regularity.

\section{Preliminaries}
\label{prelim}

First, let us first recall
 some basic concepts, notation, and results on which
our study is based. For more background on spectra of
graphs, distance-regular graphs, and their characterizations,
see \cite{b93,bcn89,bh12,cds82,vdkt12,f02,g93}. Throughout this
paper, $G=(V,E)$ denotes a finite, simple, and connected graph
with vertex set $V$, order $n=|V|$, size $e=|E|$, and diameter $D$. The set (`sphere')
of vertices at distance $i=0,\ldots,D$ from a given vertex $u\in V$ is
denoted by $S_i(u)$, and we let $k_i(u)=|S_i(u)|$.
When the numbers $k_i(u)$
do not depend on the vertex $u\in V$, which for example is the case when $G$ is distance-regular, we simply write $k_i$.
For a regular graph, we sometimes abbreviate the valency $k_1$ by $k$.

Recall also that, for every $i=0,\ldots,D$, the distance matrix $\A_i$ has entries $(\A_i)_{uv}=1$ if the distance
between $u$ and $v$, denoted $\dist(u,v)$, is given by $\dist(u,v)=i$, and $(\A_i)_{uv}=0$ otherwise. Thus, $\A_i$ is the adjacency matrix of the distance-$i$ graph $G_i$. In particular, $\A_0=\I$ is the identity matrix, $\A_1=\A$ is the adjacency matrix of $G$. Note that $\A_0+\cdots +\A_D=\J$, the all-ones matrix.

The spectrum of $G$ is defined as the spectrum of $\A$, i.e.,
$\spec G := \spec \A=\{\lambda_0^{m_0},\ldots,\lambda_d^{m_d}\}$,
where the distinct eigenvalues of $\A$ are ordered decreasingly:
$\lambda_0>\cdots >\lambda_d$, and the superscripts
stand for their multiplicities $m_i=m(\lambda_i)$. Note that, since $G$ is connected, $m_0=1$, and if $G$ is regular then $\lambda_0=k$. Throughout the paper, $d$ will denote the number of distinct eigenvalues minus one. It is well-known that the diameter is bounded by this number, i.e., $D \leq d$.
Let $\mu$ be the minimal polynomial of $\A$, that is,
$\mu(x)=\prod_{i=0}^d(x-\lambda_i)$. Then
the {\em Hoffman polynomial} $H$ given by $H(x)=n \mu(x)/\mu(\lambda_0)$ characterizes regularity of $G$ by the condition $H(\A)=\J$ (see Hoffman~\cite{hof63}).

 \section{Orthogonal polynomials and preintersection numbers}
\label{sec:prepol}
Orthogonal polynomials have been useful in the study of distance-regular graphs. Given a graph $G$ with adjacency matrix $\A$, and spectrum $\{\lambda_0^{m_0},\ldots,\lambda_d^{m_d}\}$, we consider the scalar product on the vector space $\Re_d[x]$ of polynomials of degree at most $d$, given by
\begin{equation}
\label{produc}
\langle p, q\rangle_G :=\frac{1}{n}\tr (p(\A)q(\A))=\frac{1}{n} \sum_{i=0}^d m_i
p(\lambda_i) q(\lambda_i).
\end{equation}
Note that the second equality in (\ref{produc}) follows from standard properties of the trace.
Within the vector space of real $n\times n$ matrices, we also use the common
scalar product
\begin{equation*}
\langle \M,\N\rangle:=\frac{1}{n}\som (\M\circ \N) =\frac{1}{n}\tr \M\N^{\top},
\end{equation*}

 where `$\circ$' stands
for the entrywise or Hadamard product, and $\som(\cdot)$ denotes the sum of the
entries of the corresponding matrix. Note that
$\langle p, q\rangle_G=\langle p(\A),q(\A)\rangle$.

Fiol and Garriga \cite{fg97} introduced the {\em predistance polynomials}
$p_0,p_1,\ldots,p_d$ as the unique sequence of orthogonal polynomials (so with $\deg p_i=i$ for $i=0,\ldots,d$) with respect to the scalar product \eqref{produc}
that are normalized as $\|p_i\|_{G}^2=p_i(\lambda_0)$. Like every sequence of orthogonal polynomials, the predistance polynomials satisfy a three-term recurrence
\begin{equation}
\label{3-term-recur}
xp_i=\beta_{i-1}p_{i-1}+\alpha_i p_i+\gamma_{i+1}p_{i+1},\quad i=0,\ldots,d,
\end{equation}
for certain {\em preintersection numbers} $\alpha_i,\beta_i$, and $\gamma_i$, where $\beta_{-1}=\gamma_{d+1}=0$, and $p_{-1}=p_{d+1}=0$. For convenience, we also define the preintersection numbers $\gamma_0=0$ and $\beta_d=0$.
Some basic properties of the predistance polynomials and preintersection numbers are included in the following result (see C\'amara, F\`abrega, Fiol, and Garriga \cite{cffg09}).

\begin{lema}
\label{ortho-pol}
Let $G$ be a graph with average degree $\overline{k}=2e/n$. Then
\begin{itemize}
\item[$(i)$]
$p_0=1$, $p_1=(\lambda_0/\overline{k})x$,
\item[$(ii)$]
$\alpha_i+\beta_i+\gamma_i=\lambda_0$, for $i=0,\ldots,d$,
\item[$(iii)$]
$p_{i-1}(\lambda_0)\beta_{i-1}=p_i(\lambda_0)\gamma_i$, for $i=1,\ldots,d$,
\item[$(iv)$]
$p_0+p_1+\cdots+p_d=H$, with $H$ the Hoffman polynomial,
\item [$(v)$]
The tridiagonal $(d+1)\times(d+1)$ `recurrence matrix' $\R$ given by
$$
\R=\left(\begin{array}
{ccccc}
\alpha_0  & \gamma_1    &            &               &              \\
\beta_0   & \alpha_1    & \gamma_2   &               &              \\
          & \beta_1     & \alpha_2   & \ddots        &              \\
          &             & \ddots     & \ddots        & \gamma_d     \\
          &             &            & \beta_{d-1}   & \alpha_d     \\
\end{array}\right)
$$
has eigenvalues $\lambda_0,\ldots,\lambda_d$.
\end{itemize}
\end{lema}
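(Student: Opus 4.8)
The plan is to exploit two structural facts throughout: that multiplication by $x$ is self-adjoint for the scalar product \eqref{produc}, and that the minimal polynomial $\mu$ vanishes at every eigenvalue, so that the top instance $i=d$ of the recurrence \eqref{3-term-recur}---which is only an identity modulo $\mu$---may still be evaluated at any $\lambda_k$. I would begin with $(i)$, a direct computation. Writing $p_0=c$, the normalization $\|p_0\|_G^2=p_0(\lambda_0)$ together with $\frac1n\sum_i m_i=1$ forces $c^2=c$, hence $p_0=1$. For $p_1=ax+b$, orthogonality to $p_0$ gives $\frac1n\tr p_1(\A)=0$, and since $\tr\A=0$ this yields $b=0$; the normalization then reads $a^2\cdot\frac1n\tr\A^2=a\lambda_0$, i.e. $a^2\overline{k}=a\lambda_0$, so $a=\lambda_0/\overline{k}$.

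Next I would prove $(iii)$, since $(ii)$ will rest on it. From \eqref{3-term-recur} the off-diagonal coefficients are the Fourier coefficients $\beta_{i-1}=\langle xp_i,p_{i-1}\rangle_G/\|p_{i-1}\|_G^2$ and $\gamma_i=\langle xp_{i-1},p_i\rangle_G/\|p_i\|_G^2$. The key observation is that $\langle xp,q\rangle_G=\frac1n\tr(\A\,p(\A)q(\A))=\langle p,xq\rangle_G$ by cyclicity of the trace, so $\langle xp_i,p_{i-1}\rangle_G=\langle xp_{i-1},p_i\rangle_G$; combining this with the two expressions above gives $\beta_{i-1}\|p_{i-1}\|_G^2=\gamma_i\|p_i\|_G^2$, and the normalization $\|p_i\|_G^2=p_i(\lambda_0)$ turns this into $(iii)$. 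For $(ii)$ I would evaluate \eqref{3-term-recur} at $x=\lambda_0$, replace $\beta_{i-1}p_{i-1}(\lambda_0)$ by $\gamma_i p_i(\lambda_0)$ and $\gamma_{i+1}p_{i+1}(\lambda_0)$ by $\beta_i p_i(\lambda_0)$ using $(iii)$, and cancel the common factor $p_i(\lambda_0)$. This cancellation is legitimate because $p_i(\lambda_0)=\|p_i\|_G^2>0$ for every $i$, a nonzero polynomial of degree at most $d$ being unable to vanish at all $d+1$ eigenvalues.

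For $(iv)$ I would expand $H$ in the orthogonal basis: the coefficient of $p_j$ is $\langle H,p_j\rangle_G/\|p_j\|_G^2$, and since $H(\lambda_0)=n$, $H(\lambda_i)=0$ for $i\ge1$, and $m_0=1$, one finds $\langle H,p_j\rangle_G=p_j(\lambda_0)=\|p_j\|_G^2$, so every coefficient equals $1$ and $H=\sum_j p_j$. Finally, $(v)$ follows by reading \eqref{3-term-recur} as the matrix identity $x\,\p(x)=\R\,\p(x)$ for the column vector $\p(x)=(p_0(x),\ldots,p_d(x))^\top$: evaluating at each eigenvalue shows $\p(\lambda_k)$ is an eigenvector of $\R$ for $\lambda_k$, nonzero since its first entry is $p_0(\lambda_k)=1$. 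As the $\lambda_k$ are distinct and there are $d+1$ of them, they exhaust $\spec\R$.

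The step needing the most care is the one flagged at the outset: the $i=d$ instance of \eqref{3-term-recur} is not a genuine polynomial identity, since $xp_d$ has degree $d+1$ while its right-hand side has degree $d$. Both $(ii)$ and $(v)$ rely on evaluating precisely this instance at the eigenvalues, which is valid because $xp_d-\alpha_d p_d-\beta_{d-1}p_{d-1}$ is a scalar multiple of $\mu$ and $\mu(\lambda_k)=0$ for all $k$. Making this reduction explicit---equivalently, working in the matrix picture $\A\,p_i(\A)=\beta_{i-1}p_{i-1}(\A)+\alpha_i p_i(\A)+\gamma_{i+1}p_{i+1}(\A)$, in which $\mu(\A)$ is the zero matrix---is what legitimizes every evaluation at an eigenvalue and is the only genuinely delicate point.
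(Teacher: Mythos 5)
Your proof is correct. Note that the paper itself does not prove Lemma \ref{ortho-pol}: it quotes the result from C\'amara, F\`abrega, Fiol, and Garriga \cite{cffg09}, so there is no in-paper argument to compare against; your derivation is essentially the standard one from that reference. All the computations check out: $(i)$ follows from the normalization and $\tr\A=0$ (with $c\neq 0$ and $a\neq 0$ guaranteed because $\dgr p_i=i$ is prescribed); $(iii)$ from self-adjointness of multiplication by $x$ under the trace inner product; $(ii)$ from evaluating the recurrence at $\lambda_0$ and cancelling $p_i(\lambda_0)=\|p_i\|_G^2>0$; $(iv)$ from the Fourier expansion of $H$ using $H(\lambda_0)=n$, $H(\lambda_i)=0$ for $i\ge 1$, and $m_0=1$; and $(v)$ from reading the recurrence as $x\,\p(x)=\R\,\p(x)$ with the nonzero eigenvectors $\p(\lambda_k)$. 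You were also right to flag, and correctly resolve, the only delicate point: the $i=d$ instance of \eqref{3-term-recur} holds only modulo the minimal polynomial $\mu$, which is harmless when evaluating at the eigenvalues (or, equivalently, when substituting $\A$).
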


For vertices $u,v$ at distance $i \le D$, we define $c_i(u,v)=|S_{i-1}(u)\cap S_{1}(v)|$,
$a_i(u,v)=|S_i(u)\cap S_{1}(v)|$, and $b_i(u,v)=|S_{i+1}(u)\cap S_{1}(v)|$.  We say that the intersection number $c_i$ is
{\em well-defined} if the numbers $c_i(u,v)$ are the same for all vertices
$u,v$ at distance $i$. Similarly, we define when $a_i$ and $b_i$ are well-defined and say that $k_i$ is well-defined if $k_i(u)$ is the same for every vertex $u$.
Note that $c_i(u,v)+a_i(u,v)+b_i(u,v)=k_1(v)$. This implies that if the graph is regular and $c_i$ and $a_i$ are well-defined, then so is $b_i$.

When the intersection numbers $c_i,a_i$, and $b_i$ are well-defined for all $i=0,\dots,D$, we say that the graph is distance-regular. In this case $D=d$ and the predistance polynomials become the distance
polynomials, so that $p_i(\A)=\A_i$ and
$p_i(\lambda_0)=k_i$ for $i=0,\ldots, D$. Moreover, the preintersection numbers $\gamma_i, \alpha_i$, and $\beta_i$  become the usual
intersection numbers $c_i, a_i,$ and $b_i$, respectively. Analogous to \eqref{3-term-recur}, we then get the recurrence
$$
\A\A_i = b_{i-1}\A_{i-1} +a_i \A_i +c_{i+1}\A_{i+1},
\qquad i =0,\ldots,D.
$$
For an arbitrary graph, we also consider the following averages: $\overline{c}_{i}$ is the average
of the numbers $c_{i}(u,v)$ over all (ordered) pairs of vertices $u,v$ at distance $i$, and similarly we define $\overline{a}_i$ and $\overline{b}_i$.
Also, let $\overline{c_i^2}$ be the average of $c_i(u,v)^2$ over all (ordered) pairs of vertices $u,v$ at distance $i$ and similarly we define $\overline{a_i^2}$ and $\overline{b_i^2}$.
Finally, $\overline{k}_i=\frac1n \sum_{u\in V}k_{i}(u)$.

\begin{lema}
\label{lem:averages} For $i=0,\dots,D$, the following properties hold:
\begin{itemize}
\item[$(i)$]
$c_i(u,v)=(\A\A_{i-1})_{vu}$, $a_i(u,v)=(\A\A_{i})_{vu}$, and $b_i(u,v)=(\A\A_{i+1})_{vu}$,
\item[$(ii)$]$\overline{k}_i={\|\A_i\|^2}$,
\item[$(iii)$]$\overline{c}_{i}=\frac{\langle \A\A_{i-1}, \A_i\rangle}{\|\A_i\|^2},\overline{a}_{i}=\frac{\langle \A\A_{i}, \A_i\rangle}{\|\A_i\|^2}$, and $\overline{b}_{i}=\frac{\langle \A\A_{i+1}, \A_i\rangle}{\|\A_i\|^2}$,
\item[$(iv)$]$\overline{c_i^2}=\frac{\langle \A\A_{i-1} \circ \A\A_{i-1}, \A_i\rangle}{\|\A_i\|^2},\overline{a_i^2}=\frac{\langle \A\A_{i}\circ \A\A_{i}, \A_i\rangle}{\|\A_i\|^2}$, and $\overline{b_i^2}=\frac{\langle \A\A_{i+1}\circ \A\A_{i+1}, \A_i\rangle}{\|\A_i\|^2}$.
\end{itemize}
\end{lema}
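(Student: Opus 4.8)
The plan is to establish $(i)$ by a direct entrywise computation, and then to obtain $(ii)$, $(iii)$, and $(iv)$ as consequences by translating the combinatorial averages into the inner product $\langle\cdot,\cdot\rangle$. The only genuine work is in $(i)$; the rest is bookkeeping built on top of it together with the elementary fact that $\langle \M,\N\rangle=\frac1n\som(\M\circ\N)$.

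First I would prove $(i)$ by expanding the matrix product. For vertices $u,v$ with $\dist(u,v)=i$,
$$(\A\A_{i-1})_{vu}=\sum_{w\in V}\A_{vw}(\A_{i-1})_{wu},$$
and since $\A_{vw}=1$ exactly when $w\in S_1(v)$ while $(\A_{i-1})_{wu}=1$ exactly when $w\in S_{i-1}(u)$, the sum counts $|S_1(v)\cap S_{i-1}(u)|=c_i(u,v)$. The identities for $a_i(u,v)$ and $b_i(u,v)$ follow verbatim, replacing $\A_{i-1}$ by $\A_i$ and $\A_{i+1}$ respectively. For $(ii)$ I would use that $\A_i$ is a $0/1$ matrix, so $\A_i\circ\A_i=\A_i$ and hence $\|\A_i\|^2=\frac1n\som(\A_i\circ\A_i)=\frac1n\som(\A_i)$; the sum of entries of $\A_i$ counts the ordered pairs at distance $i$, i.e. $\sum_{u\in V}k_i(u)=n\,\overline{k}_i$, which gives $\|\A_i\|^2=\overline{k}_i$.

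For $(iii)$ and $(iv)$ the key device is that the Hadamard product with $\A_i$ restricts any entrywise quantity to the ordered pairs at distance $i$, of which there are $\sum_{u\in V}k_i(u)=n\,\overline{k}_i=n\|\A_i\|^2$. Using $(i)$, for $(iii)$ I would write
$$\sum_{\dist(u,v)=i}c_i(u,v)=\som(\A_i\circ\A\A_{i-1})=n\langle \A\A_{i-1},\A_i\rangle,$$
and dividing by the number $n\|\A_i\|^2$ of such pairs yields the stated formula for $\overline{c}_i$; the expressions for $\overline{a}_i$ and $\overline{b}_i$ are identical with $\A_{i-1}$ replaced by $\A_i$ and $\A_{i+1}$. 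For $(iv)$ I would additionally observe that pointwise squaring of an entry is the same as taking the Hadamard square, so that $c_i(u,v)^2=(\A\A_{i-1}\circ\A\A_{i-1})_{vu}$ on pairs at distance $i$, and the same counting collapses the average again to a single inner product, giving $\overline{c_i^2}=\langle\A\A_{i-1}\circ\A\A_{i-1},\A_i\rangle/\|\A_i\|^2$.

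I do not expect any real obstacle here. The only points requiring care are the index conventions — keeping the order $vu$ consistent with the definition $c_i(u,v)=|S_{i-1}(u)\cap S_1(v)|$, which is in any case harmless since $\A$ and $\A_i$ are symmetric — and, in $(iv)$, recognizing that the Hadamard square is exactly what converts the sum of squared intersection numbers into an inner product against $\A_i$, so that $(iii)$ and $(iv)$ share a single counting argument.
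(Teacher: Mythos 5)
Your proposal is correct and follows essentially the same route as the paper: a direct entrywise expansion for $(i)$, the observation that $\A_i$ is a $0/1$ matrix for $(ii)$, and the reduction of the averages in $(iii)$ and $(iv)$ to inner products against $\A_i$ via $\langle \M,\N\rangle=\frac1n\som(\M\circ\N)$. No gaps.
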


\begin{proof} $(i)$ $c_i(u,v)=|S_{i-1}(u)\cap S_{1}(v)|=\sum_{w \in V}\A_{vw}(\A_{i-1})_{wu}=(\A\A_{i-1})_{vu}$. The other expressions follow in a similar manner.

$(ii)$ $\overline{k}_i=\frac1n \sum_{u\in V}k_{i}(u) =\frac1n \sum_{u,v \in V} (\A_i)_{uv}=\frac1n \sum_{u,v \in V} (\A_i)_{uv}(\A_i)_{uv}=\langle \A_i,\A_i \rangle={\|\A_i\|^2}$.

$(iii)$ From using $(i)$ and $(ii)$, it follows that
$$\overline{c}_{i}= \frac{1}{n\overline{k}_i}\sum_{u\in
V}\sum_{v\in
S_i(u)}c_i(u,v)= \frac{1}{n\overline{k}_i}\sum_{u,v\in
V}(\A_i)_{vu}(\A\A_{i-1})_{vu}=\frac{\langle \A\A_{i-1}, \A_i\rangle}{\|\A_i\|^2}.$$
The other expressions follow in a similar manner.

$(iv)$ This follows in a similar manner as $(iii)$.
\end{proof}

We remark that the intersection numbers $c_i(u,v), a_i(u,v),$ and $b_i(u,v)$ are not necessarily symmetric in $u$ and $v$, and hence neither are the products $\A\A_{i-1}$, $\A\A_{i}$, and $\A\A_{i+1}$ necessarily symmetric matrices.
We also note the resemblance of the expressions in Lemma \ref{lem:averages}$(iii)$ and
$$
\gamma_i=\frac{\langle
xp_{i-1},p_{i}\rangle_G}{\|p_{i}\|_G^2}, \alpha_i=\frac{\langle
xp_{i},p_{i}\rangle_G}{\|p_{i}\|_G^2}, \text{ and } \beta_i=\frac{\langle
xp_{i+1},p_{i}\rangle_G}{\|p_{i}\|_G^2},
$$
for $i=0,\dots,d$, which follow from \eqref{3-term-recur}. The expressions in Lemma \ref{lem:averages}$(iv)$ lead to the following resembling results (where we define $\overline{k}_{-1}, \overline{b_{-1}^2}, \overline{k}_{D+1},$ and $\overline{c_{D+1}^2}$ to be $0$):

\begin{lema}
\label{lem:squares} The following properties hold:
\begin{itemize}
\item[$(i)$]
$\| \A\A_{i} \|^2 =\overline{k}_{i-1}\overline{b_{i-1}^2}+\overline{k}_{i}\overline{a_{i}^2}+\overline{k}_{i+1}\overline{c_{i+1}^2}$ for $i=0,\dots,D$,
\item[$(ii)$]
$\| xp_i \|_G^2 =p_{i-1}(\lambda_0)\beta_{i-1}^2+p_{i}(\lambda_0)\alpha_{i}^2+p_{i+1}(\lambda_0)\gamma_{i+1}^2$ for $i=0,\dots,d$.
\end{itemize}
\end{lema}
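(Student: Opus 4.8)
The plan is to prove the two parts by exploiting their structural parallel: each asserts that a squared norm splits into three terms indexed by $i-1$, $i$, and $i+1$, and in both cases this is a Pythagorean-type identity once one checks that only three ``bands'' contribute. I would present them side by side, since the argument for $(ii)$ is the orthogonal-polynomial shadow of the argument for $(i)$.

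I would do part $(ii)$ first, as it is the cleaner one. Starting from the three-term recurrence \eqref{3-term-recur}, write $xp_i=\beta_{i-1}p_{i-1}+\alpha_i p_i+\gamma_{i+1}p_{i+1}$. Then $\|xp_i\|_G^2=\langle xp_i, xp_i\rangle_G$, and upon expanding, every cross term $\langle p_j,p_k\rangle_G$ with $j\neq k$ vanishes because $p_0,\dots,p_d$ are orthogonal with respect to $\langle\cdot,\cdot\rangle_G$. What survives is $\beta_{i-1}^2\|p_{i-1}\|_G^2+\alpha_i^2\|p_i\|_G^2+\gamma_{i+1}^2\|p_{i+1}\|_G^2$, and substituting the normalization $\|p_j\|_G^2=p_j(\lambda_0)$ yields the claimed identity. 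The conventions $\beta_{-1}=\gamma_{d+1}=0$ and $p_{-1}=p_{d+1}=0$ take care of the endpoints $i=0$ and $i=d$.

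For part $(i)$ I would mirror this on the matrix side. Since $\J$ is the all-$1$ matrix, one has $\|\A\A_i\|^2=\langle \A\A_i\circ\A\A_i, \J\rangle$, and expanding $\J=\A_0+\cdots+\A_D$ reduces the claim to showing that only $j\in\{i-1,i,i+1\}$ contribute to $\sum_{j}\langle \A\A_i\circ\A\A_i,\A_j\rangle$, and that these three terms equal $\overline{k}_{i-1}\overline{b_{i-1}^2}$, $\overline{k}_{i}\overline{a_{i}^2}$, and $\overline{k}_{i+1}\overline{c_{i+1}^2}$. The vanishing of the other bands is the analogue of orthogonality: $(\A\A_i)_{vu}=\sum_w\A_{vw}(\A_i)_{wu}$ counts vertices $w$ adjacent to $v$ and at distance $i$ from $u$, so by the triangle inequality it is zero unless $\dist(u,v)\in\{i-1,i,i+1\}$. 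The three surviving terms are then read off from the three displayed formulas of Lemma~\ref{lem:averages}$(iv)$ with the indices shifted appropriately (the $c$-formula at index $i+1$, the $a$-formula at index $i$, and the $b$-formula at index $i-1$), using $\|\A_j\|^2=\overline{k}_j$ from Lemma~\ref{lem:averages}$(ii)$ to clear denominators.

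The only genuinely nontrivial point—and thus the step I would be most careful about—is the band-support observation in part $(i)$: confirming via the triangle inequality that $\A\A_i\circ\A\A_i$ is supported on the three diagonals $\dist(u,v)\in\{i-1,i,i+1\}$, and matching each diagonal to the correct average through the index shifts in Lemma~\ref{lem:averages}$(iv)$. This is precisely the combinatorial counterpart of orthogonality in part $(ii)$, and once it is in place both identities follow by direct substitution, with no further computation required.
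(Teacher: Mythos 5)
Your proposal is correct and follows essentially the same route as the paper: part $(ii)$ by expanding the three-term recurrence and using orthogonality with the normalization $\|p_j\|_G^2=p_j(\lambda_0)$, and part $(i)$ by observing that $\A\A_i$ is supported on the distance bands $i-1,i,i+1$ (the paper phrases this as $\A\A_i=\A\A_i\circ(\A_{i-1}+\A_i+\A_{i+1})$, you via $\J=\A_0+\cdots+\A_D$, which is the same decomposition) and then reading off the three terms from Lemma~\ref{lem:averages}$(ii)$ and $(iv)$ with exactly the index shifts you describe.
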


\begin{proof}
  $(i)$ We first note that $\A\A_{i}=\A\A_{i} \circ (\A_{i-1}+\A_i+\A_{i+1})$ because $(\A\A_i)_{uv}=0$ if the distance between $u$ and $v$ is not $i-1,i,$ or $i+1$. Therefore
\begin{align*}
\| \A\A_{i} \|^2 &=\frac1n \som(\A\A_{i} \circ \A\A_{i})=\frac1n \som(\A\A_{i} \circ \A\A_{i} \circ (\A_{i-1}+\A_i+\A_{i+1}))\\
&=\langle \A\A_{i} \circ \A\A_{i}, \A_{i-1} \rangle+\langle \A\A_{i} \circ \A\A_{i}, \A_{i} \rangle+\langle \A\A_{i} \circ \A\A_{i}, \A_{i+1} \rangle.
\end{align*}
 Using Lemma \ref{lem:averages}$(ii)$ and $(iv)$, the result now follows.

$(ii)$ Because the predistance polynomials are orthogonal polynomials, we obtain that
\begin{align*}
\| xp_i \|_G^2 &=\| \beta_{i-1}p_{i-1}+\alpha_i p_i+\gamma_{i+1}p_{i+1} \|_G^2
=\beta_{i-1}^2 \| p_{i-1}\|_G^2 +\alpha_{i}^2 \| p_{i} \|_G^2 +\gamma_{i+1}^2 \| p_{i+1} \|_G^2 \\ &=p_{i-1}(\lambda_0)\beta_{i-1}^2+p_{i}(\lambda_0)\alpha_{i}^2+p_{i+1}(\lambda_0)\gamma_{i+1}^2. \qedhere
\end{align*}
\end{proof}

Furthermore, we need the following properties of the predistance polynomials and preintersection numbers.

\begin{lema}
\label{two-terms}
For $i=0,\ldots,d$, the two highest terms of the predistance polynomial $p_i$ are given by
$$
\textstyle
p_i(x)=\frac{1}{\gamma_1\cdots\gamma_i}[x^i - (\alpha_1+\cdots+\alpha_{i-1})x^{i-1}+\cdots].
$$
\end{lema}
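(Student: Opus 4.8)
The plan is to prove the claim by induction on $i$, carrying along only the two pieces of data that the statement constrains: the leading coefficient and the coefficient of $x^{i-1}$ of $p_i$. Write $p_i(x)=t_i x^i+s_i x^{i-1}+\cdots$, so that the assertion to be proved is exactly
$$
t_i=\frac{1}{\gamma_1\cdots\gamma_i},\qquad s_i=-\frac{\alpha_1+\cdots+\alpha_{i-1}}{\gamma_1\cdots\gamma_i},
$$
with the conventions that an empty product equals $1$ and an empty sum equals $0$.

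The engine is the three-term recurrence \eqref{3-term-recur}, which I would rearrange to isolate the next polynomial,
$$
p_{i+1}=\frac{1}{\gamma_{i+1}}\bigl(xp_i-\alpha_i p_i-\beta_{i-1}p_{i-1}\bigr).
$$
The crucial structural observation is that $\beta_{i-1}p_{i-1}$ has degree $i-1$, strictly below the two top degrees $i+1$ and $i$ of $p_{i+1}$, so it contributes nothing to the two leading coefficients; hence the top two terms of $p_{i+1}$ are determined by the top two terms of $p_i$ alone. Reading off the coefficient of $x^{i+1}$ gives $t_{i+1}=t_i/\gamma_{i+1}$, while the coefficient of $x^i$ picks up $s_i$ from $xp_i$ and $-\alpha_i t_i$ from $-\alpha_i p_i$, so that $s_{i+1}=(s_i-\alpha_i t_i)/\gamma_{i+1}$.

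Substituting the inductive hypothesis then closes the induction: $t_{i+1}=1/(\gamma_1\cdots\gamma_{i+1})$ is immediate, and
$$
s_{i+1}=\frac{1}{\gamma_{i+1}}\left(-\frac{\alpha_1+\cdots+\alpha_{i-1}}{\gamma_1\cdots\gamma_i}-\frac{\alpha_i}{\gamma_1\cdots\gamma_i}\right)=-\frac{\alpha_1+\cdots+\alpha_i}{\gamma_1\cdots\gamma_{i+1}},
$$
which is precisely the claimed form for index $i+1$. For the base case I would start from $p_0=1$ (so $t_0=1$, $s_0=0$) and run one pass of the recurrence: using $\alpha_0=0$ (since $\langle x,1\rangle_G=\tfrac1n\tr\A=0$) together with $\beta_{-1}=0$ and $p_{-1}=0$, this reproduces $p_1=(\lambda_0/\overline{k})x$ from Lemma \ref{ortho-pol}$(i)$, forcing $\gamma_1=\overline{k}/\lambda_0$ and confirming the formula at $i=1$ with empty second term.

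The computation is routine arithmetic in the recurrence; the only place demanding care is the bookkeeping at the boundary. I would make sure the degenerate conventions $\gamma_0=0$, $\beta_{-1}=0$, $p_{-1}=0$, and the empty-product/empty-sum conventions all align so that the single inductive step is valid uniformly for every $i$ from $0$ up to $d-1$, and that $\gamma_{i+1}\neq 0$ (guaranteed because the $p_i$ form a sequence of orthogonal polynomials with $\deg p_i=i$) so that the division defining $p_{i+1}$ is legitimate.
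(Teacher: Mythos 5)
Your proof is correct and follows exactly the route the paper indicates: induction on $i$ via the three-term recurrence \eqref{3-term-recur} starting from $p_0=1$, with the observation that $\beta_{i-1}p_{i-1}$ cannot affect the top two coefficients. The paper states this proof in one line; you have simply filled in the details (including the needed facts $\alpha_0=0$ and $\gamma_{i+1}\neq 0$) correctly.
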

\begin{proof}
Use induction by using the three-term recurrence \eqref{3-term-recur} and initial value $p_0=1$.
\end{proof}
The following result is a straightforward consequence of Lemmas \ref{ortho-pol}$(i)$ and \ref{two-terms} and the fact that $G$ is regular if and only
if $\lambda_0=\overline{k}$ (see e.g. Brouwer and Haemers \cite{bcn89}):

\begin{lema}\label{lemma_regulargraph} Let $G$ be a graph. Then the following properties are equivalent:
$(i)$ $G$ is regular; $(ii)$ $p_1=x$; and $(iii)$ $\gamma_1=1$.
\end{lema}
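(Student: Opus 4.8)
The plan is to extract two independent expressions for the degree-one predistance polynomial $p_1$ and compare their leading coefficients. By Lemma \ref{ortho-pol}$(i)$ we already have $p_1=(\lambda_0/\overline{k})\,x$. On the other hand, specializing Lemma \ref{two-terms} to $i=1$ gives $p_1(x)=\frac{1}{\gamma_1}[\,x-(\alpha_1+\cdots+\alpha_{i-1})x^{i-1}+\cdots\,]$, where for $i=1$ the correction term is indexed by the empty sum $\alpha_1+\cdots+\alpha_0$ and therefore vanishes; hence $p_1=\frac{1}{\gamma_1}\,x$. Matching the two expressions coefficient-wise yields the identity $\gamma_1=\overline{k}/\lambda_0$.

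With this identity in hand the three equivalences follow at once from the quoted fact that $G$ is regular if and only if $\lambda_0=\overline{k}$. Since $G$ is connected and has at least one edge, both $\lambda_0$ and $\overline{k}$ are positive, so all the ratios above are well-defined and nonzero. For $(i)\Leftrightarrow(ii)$: regularity means $\lambda_0=\overline{k}$, i.e.\ $\lambda_0/\overline{k}=1$, which by the first expression for $p_1$ is exactly the statement $p_1=x$. For $(i)\Leftrightarrow(iii)$: the same condition $\lambda_0=\overline{k}$ is equivalent to $\overline{k}/\lambda_0=1$, which by the derived identity $\gamma_1=\overline{k}/\lambda_0$ is exactly $\gamma_1=1$.

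I do not expect any real obstacle here; the whole argument is a matter of reading off a single coefficient and invoking the regularity characterization. The only point that merits a line of care is the treatment of the empty sum in Lemma \ref{two-terms} at $i=1$, since it is precisely the vanishing of that term that makes $p_1$ a scalar multiple of $x$ and lets the comparison of leading coefficients produce the clean relation $\gamma_1=\overline{k}/\lambda_0$ on which everything rests.
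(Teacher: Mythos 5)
Your proof is correct and takes essentially the same route as the paper: the paper states this lemma as a straightforward consequence of Lemma \ref{ortho-pol}$(i)$, Lemma \ref{two-terms}, and the equivalence of regularity with $\lambda_0=\overline{k}$, which is precisely the argument you spell out (comparing the two expressions for $p_1$ to get $\gamma_1=\overline{k}/\lambda_0$). Your explicit handling of the empty sum at $i=1$ is a reasonable filling-in of the detail the paper leaves implicit.
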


It is clear that the intersection numbers $a_i$, $b_i$, and $c_i$ of a distance-regular graph are nonnegative integers with precise combinatorial meanings. In contrast, this does not hold for the corresponding preintersection numbers $\alpha_i$, $\beta_i$, and $\gamma_i$, which in general are not even integers. Nevertheless, they do share some properties, as shown in Lemma \ref{ortho-pol} and in the following result.

\begin{lema}
\label{properties-preintersectionnumbers}
Let $G$ be a graph with distinct eigenvalues $\lambda_0>\cdots >\lambda_d$, and preintersection numbers $\alpha_i$, $\beta_i$, and $\gamma_i$. Then
\begin{itemize}
\item[$(i)$]
$\gamma_{i}>0$ for
$i=1,\ldots,d$, and $\beta_{i}>0$ for $i=0,\ldots,d-1$,
\item[$(ii)$]
$\sum_{i=0}^d \alpha_i=\sum_{i=0}^d \lambda_i$.
\end{itemize}
\end{lema}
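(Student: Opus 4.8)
The plan is to dispatch $(ii)$ first, since it is immediate from the recurrence matrix, and then to build up $(i)$ from the positive-definiteness of the scalar product \eqref{produc} together with the standard location of the zeros of orthogonal polynomials. For $(ii)$ I would simply read off the trace of the recurrence matrix $\R$ of Lemma \ref{ortho-pol}$(v)$: its diagonal entries are $\alpha_0,\dots,\alpha_d$, so $\tr\R=\sum_{i=0}^d\alpha_i$, while $\R$ has eigenvalues $\lambda_0,\dots,\lambda_d$, whence $\tr\R=\sum_{i=0}^d\lambda_i$; comparing the two expressions gives the claim.

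For $(i)$, the first step is to observe that \eqref{produc} is a positive-definite inner product on $\Re_d[x]$: a polynomial of degree at most $d$ that vanishes at the $d+1$ distinct points $\lambda_0,\dots,\lambda_d$ is identically zero, so $\langle p,p\rangle_G=\frac1n\sum_{i}m_i\,p(\lambda_i)^2>0$ whenever $0\ne p\in\Re_d[x]$. In particular the normalization $\|p_i\|_G^2=p_i(\lambda_0)$ forces $p_i(\lambda_0)=\|p_i\|_G^2>0$ for every $i=0,\dots,d$, which is the fact I will use repeatedly.

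Next I would pin down the sign of the leading coefficient of each $p_i$. Since the $p_i$ are orthogonal with respect to the positive measure $\frac1n\sum_i m_i\,\delta_{\lambda_i}$ on $\mathbb{R}$ with support $\{\lambda_0,\dots,\lambda_d\}$, the $i$ zeros of $p_i$ lie in the open interval $(\lambda_d,\lambda_0)$, by the classical zero-location theorem for orthogonal polynomials. Hence $\lambda_0$ exceeds every zero of $p_i$, so $p_i(\lambda_0)$ and the leading coefficient of $p_i$ have the same sign; as $p_i(\lambda_0)>0$, that leading coefficient is positive. By Lemma \ref{two-terms} the leading coefficient equals $1/(\gamma_1\cdots\gamma_i)$, so that $\gamma_i$ is the ratio of the (positive) leading coefficients of $p_{i-1}$ and $p_i$, and therefore $\gamma_i>0$ for $i=1,\dots,d$. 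Finally, Lemma \ref{ortho-pol}$(iii)$ reads $p_{i-1}(\lambda_0)\beta_{i-1}=p_i(\lambda_0)\gamma_i$; with both evaluations positive and $\gamma_i>0$, this yields $\beta_{i-1}>0$, i.e. $\beta_i>0$ for $i=0,\dots,d-1$.

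The main obstacle is the \emph{separation} of the signs of $\gamma_i$ and $\beta_{i-1}$. The three-term recurrence only delivers the off-diagonal product $\beta_{i-1}\gamma_i$ of $\R$, which is easily seen to be positive, and Lemma \ref{ortho-pol}$(iii)$ only shows that $\gamma_i$ and $\beta_{i-1}$ share the same sign; neither fact rules out both being negative. What breaks this symmetry is precisely the evaluation at $\lambda_0$: because $\lambda_0$ is the largest point of the support, it lies to the right of all zeros of $p_i$, tying the (already established) positivity of $p_i(\lambda_0)$ to the positivity of the leading coefficient. Verifying this zero-location property is the one genuinely non-formal ingredient of the argument.
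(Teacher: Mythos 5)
Your proof is correct and follows essentially the same route as the paper's: part $(ii)$ from the trace of the recurrence matrix $\R$, and part $(i)$ by combining $p_i(\lambda_0)=\|p_i\|_G^2>0$ with the zero-location theorem for orthogonal polynomials to get positivity of the leading coefficients $1/(\gamma_1\cdots\gamma_i)$, then transferring to the $\beta_i$ via Lemma \ref{ortho-pol}$(iii)$. Your write-up is in fact slightly more careful than the paper's at the step deducing the sign of the leading coefficient from the evaluation at $\lambda_0$.
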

\begin{proof}
$(i)$ First note that $p_i(\lambda_0)=\|p_i\|_{G}^2>0$ for every $i=0,\ldots,d$. Thus, by Lemma \ref{ortho-pol}$(iii)$, we only need to prove the condition on the $\gamma_i$'s. Moreover, by the interlacing property of orthogonal
polynomials, we know that all the zeros of $p_i$ lie between $\lambda_d$ and
$\lambda_0$. Consequently, the leading coefficient $\omega_i$ of $p_i$ must
be positive, as $\lim_{x\rightarrow \infty}p_i(x) =
\infty$. Thus, the conclusion is obtained by Lemma \ref{two-terms} since
$\omega_i=(\gamma_1\cdots\gamma_i)^{-1}$ for $i=1,\ldots,d$.
To prove $(ii)$, just use Lemma \ref{ortho-pol}$(v)$ and consider the trace of the recurrence matrix $\R$.
\end{proof}

In contrast to $\gamma_{i}>0$ and $\beta_{i}>0$, there are graphs such
that $\lambda_0+\cdots+\lambda_d<0$ and, hence, by Lemma
\ref{properties-preintersectionnumbers}$(ii)$,  some of their
preintersection numbers $\alpha_i$ must be negative. For instance, this is the case for
the cubic graph on twelve vertices listed as no. $3.83$ in \cite{cds82}.

Finally, by the same inductive argument used by Van Dam and Haemers \cite{vdh11}, we have that the {\em odd-girth} of a graph (that is, the length of its shortest odd cycle) can be determined from the preintersection numbers as follows.

\begin{lema}
\label{oddgirthlemma}
A non-bipartite graph has odd-girth $2m+1$ if and only if $\alpha_0=\cdots=\alpha_{m-1}=0$ and $\alpha_m \neq 0$. A graph is bipartite if and only if $\alpha_0=\cdots=\alpha_{d}=0$.
\end{lema}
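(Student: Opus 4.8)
The plan is to translate the combinatorial notion of odd-girth into the language of traces of powers of $\A$, and then to relate those traces to the preintersection numbers $\alpha_i$ through the predistance polynomials. Since $(\A^k)_{uu}$ counts the closed walks of length $k$ rooted at $u$, and every closed walk of odd length contains an odd cycle of no greater length, the odd-girth of a non-bipartite graph is exactly the smallest odd integer $2m+1$ for which $\tr(\A^{2m+1})>0$; equivalently, $\tr(\A^{2l+1})=0$ for $l=0,\dots,m-1$ while $\tr(\A^{2m+1})>0$. Likewise, $G$ is bipartite precisely when $\tr(\A^{2l+1})=0$ for every $l\ge 0$. Writing $\tr(\A^k)=n\langle x^k,1\rangle_G$, it therefore suffices to express these odd moments in terms of the $\alpha_i$.

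The heart of the argument is an inductive parity claim: if $\alpha_0=\cdots=\alpha_{i-1}=0$, then $p_i(-x)=(-1)^ip_i(x)$. This follows from the three-term recurrence \eqref{3-term-recur}: with $p_0=1$ and $\alpha_0=0$ (which holds automatically, since $\tr\A=0$) one gets that $p_1$ is odd, and if $p_{i-1},p_i$ already have parities $i-1,i$ and $\alpha_i=0$, then $\gamma_{i+1}p_{i+1}=xp_i-\beta_{i-1}p_{i-1}$ shows $p_{i+1}$ has parity $i+1$ (recall $\gamma_j>0$ by Lemma \ref{properties-preintersectionnumbers}). Given this parity, $p_i^2$ is an even polynomial of degree $2i$ with leading coefficient $\omega_i^2=(\gamma_1\cdots\gamma_i)^{-2}$ by Lemma \ref{two-terms}, so $xp_i(x)^2$ is a combination of the odd powers $x,x^3,\dots,x^{2i+1}$ only. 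Hence
$$
n\,\alpha_i\,p_i(\lambda_0)=n\langle xp_i,p_i\rangle_G=\tr\!\big(\A\,p_i(\A)^2\big)=\sum_{l=0}^{i}e_{2l}\,\tr(\A^{2l+1}),
$$
where $e_{2l}$ is the coefficient of $x^{2l}$ in $p_i^2$, so $e_{2i}=\omega_i^2$. Using this together with the parity claim, I would prove by induction on $m$ that $\alpha_0=\cdots=\alpha_{m-1}=0$ is equivalent to $\tr(\A^{2l+1})=0$ for $l=0,\dots,m-1$: assuming the equivalence up to $m-1$, the displayed identity collapses to $n\,\alpha_m\,p_m(\lambda_0)=\omega_m^2\,\tr(\A^{2m+1})$, and since $p_m(\lambda_0)>0$ and $\omega_m>0$, one has $\alpha_m=0$ if and only if $\tr(\A^{2m+1})=0$.

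With this equivalence in hand the two statements follow. For a non-bipartite graph, odd-girth $2m+1$ means $\tr(\A^{2l+1})=0$ for $l<m$ and $\tr(\A^{2m+1})>0$ (with $m\le d$, since otherwise all odd traces up to $l=d$ vanish and $G$ is bipartite by the argument below), which by the equivalence is exactly $\alpha_0=\cdots=\alpha_{m-1}=0$ and, because $\tr(\A^{2m+1})\ge 0$, $\alpha_m\neq 0$. For the bipartite statement, $\alpha_0=\cdots=\alpha_d=0$ is equivalent to $\tr(\A^{2l+1})=0$ for $l=0,\dots,d$; to conclude bipartiteness I must upgrade these finitely many vanishings to all $l$. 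Writing $\tr(\A^{2l+1})=\sum_{j=0}^{r}Y_j\nu_j^{\,l}$, where $\nu_0,\dots,\nu_r$ ($r\le d$) are the distinct values of $\lambda_i^2$ and $Y_j=\sum_{i:\lambda_i^2=\nu_j}m_i\lambda_i$, the vanishing for $l=0,\dots,r$ forces $Y_0=\cdots=Y_r=0$ by invertibility of the Vandermonde matrix, whence $\tr(\A^{2l+1})=0$ for every $l$ and $G$ is bipartite; the converse is immediate. I expect the main obstacle to be bookkeeping the induction cleanly — in particular invoking the parity of $p_i$ and the reduction of the displayed trace identity in the right order so that the equivalence genuinely propagates — rather than any single hard estimate.
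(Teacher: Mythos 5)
Your proof is correct. The paper gives no proof of this lemma---it simply points to the inductive argument of Van Dam and Haemers \cite{vdh11}---and what you have written out (the parity of $p_i$ when $\alpha_0=\cdots=\alpha_{i-1}=0$, the identity $n\alpha_i p_i(\lambda_0)=\tr\big(\A\,p_i(\A)^2\big)$ collapsing to the top odd trace $\omega_i^2\tr(\A^{2i+1})$, and the Vandermonde step upgrading the vanishing of the first $d+1$ odd traces to all of them) is exactly that argument carried out in full.
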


Note that in general, the girth is not determined by the spectrum, but for regular graphs it is. In Corollary \ref{cor:girth} we will make this explicit in terms of the preintersection numbers.

\section{New quasi-spectral characterizations of distance-regular graphs}
\label{sec-new}
This section contains the main results of our work. We will give sufficient conditions for a graph to be distance-regular, without requiring the graph to be cospectral with a distance-regular graph.
We begin with an alternative formulation of the so-called odd-girth theorem \cite{vdh11}.

\subsection{The odd-girth theorem revisited}
\label{odd-girth}

Theorem \ref{theo-book}$(iv)$ was generalized by Van Dam and Haemers \cite{vdh11} as the odd-girth theorem, which
states that a graph $G$ with $d+1$ distinct eigenvalues and odd-girth $2d+1$ is distance-regular.
By Lemma \ref{oddgirthlemma}, the condition on the odd-girth of $G$ is equivalent to $\alpha_1=\cdots=\alpha_{d-1}=0$, $\alpha_d\neq 0$, which corresponds to the condition $a_1=\cdots=a_{d-1}=0$, $a_d\neq 0$ of Theorem \ref{theo-book}$(iv)$. Note that Lee and Weng \cite{lw11} and Van Dam and Fiol \cite{vdf12} showed that the odd-girth theorem is not restricted to regular graphs.

Before presenting an alternative formulation of the odd-girth theorem, recall that a {\em generalized Odd graph} is a distance-regular graph with diameter $D$ and odd-girth $2D+1$. A well-known example is the {\em Odd graph} $O_{D+1}$, whose vertices represent the $D$-element subsets of a $(2D+1)$-element set, where two vertices are adjacent if and only if their corresponding subsets are disjoint, see Biggs \cite{b93}.

\begin{theo}
\label{thm:variationoddgirththeorem}
Let $G$ be a non-bipartite graph with $d+1$ distinct eigenvalues.
\begin{itemize}
\item[$(i)$]
 If $\alpha_i\ge 0$ for $i=0,\ldots,d-1$, then
\begin{equation*}
\gamma_d\ge -(\lambda_1+\cdots+\lambda_d),
\end{equation*}
with equality if and only if $G$ is a (distance-regular) generalized Odd graph.
\item[$(ii)$]
If $G$ has odd-girth at least $2d-1$ and $\gamma_d=-(\lambda_1+\cdots+\lambda_d)$, then $G$ is a (distance-regular) generalized Odd graph.
\end{itemize}
\end{theo}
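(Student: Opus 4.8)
The plan is to exploit the recurrence matrix $\R$ of Lemma~\ref{ortho-pol}$(v)$ together with the trace identity of Lemma~\ref{properties-preintersectionnumbers}$(ii)$. First I would observe that for a non-bipartite graph, Lemma~\ref{oddgirthlemma} tells us that at least one $\alpha_i$ is nonzero. The key quantity to control is $\sum_{i=0}^d \alpha_i = \sum_{i=0}^d \lambda_i$, equivalently (using $\lambda_0 = \overline{k}$ and $\alpha_0 = 0$, the latter because $p_0 = 1$ forces $\langle xp_0, p_0\rangle_G = \tfrac1n\tr\A = 0$) the identity $\alpha_1 + \cdots + \alpha_d = \lambda_1 + \cdots + \lambda_d$. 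For part $(i)$, under the hypothesis $\alpha_i \ge 0$ for $i = 0,\ldots,d-1$, I would rearrange this to $\alpha_d = (\lambda_1 + \cdots + \lambda_d) - (\alpha_1 + \cdots + \alpha_{d-1}) \le \lambda_1 + \cdots + \lambda_d$. The next step is to relate $\alpha_d$ to $\gamma_d$. Since $\alpha_d + \gamma_d = \lambda_0$ by Lemma~\ref{ortho-pol}$(ii)$ (using $\beta_d = 0$), we get $\gamma_d = \lambda_0 - \alpha_d \ge \lambda_0 - (\lambda_1 + \cdots + \lambda_d) = -(\lambda_1 + \cdots + \lambda_d)$, because $\lambda_0 + \lambda_1 + \cdots + \lambda_d = \tr\A = 0$. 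This yields the claimed inequality.

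For the equality analysis in $(i)$, equality $\gamma_d = -(\lambda_1 + \cdots + \lambda_d)$ forces $\alpha_d = \lambda_0 - \gamma_d = \lambda_1 + \cdots + \lambda_d$, hence $\alpha_1 + \cdots + \alpha_{d-1} = 0$. Combined with $\alpha_i \ge 0$ for $i \le d-1$, this forces $\alpha_1 = \cdots = \alpha_{d-1} = 0$ (and $\alpha_0 = 0$ already), while $\alpha_d \neq 0$ since the graph is non-bipartite. By Lemma~\ref{oddgirthlemma} this is exactly the statement that $G$ has odd-girth $2d+1$. Then I would invoke the odd-girth theorem of Van Dam and Haemers \cite{vdh11}: a graph with $d+1$ distinct eigenvalues and odd-girth $2d+1$ is distance-regular, so $G$ is distance-regular with $a_1 = \cdots = a_{d-1} = 0$, $a_d \neq 0$, i.e. a generalized Odd graph. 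The converse (that a generalized Odd graph attains equality) is immediate: for such a graph the preintersection numbers equal the intersection numbers, so $\alpha_i = a_i = 0$ for $i < d$ and the trace identity gives $\alpha_d = \lambda_1 + \cdots + \lambda_d$, whence $\gamma_d = -(\lambda_1 + \cdots + \lambda_d)$.

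For part $(ii)$, the hypothesis is weaker on the odd-girth side (only odd-girth at least $2d-1$, so $\alpha_1 = \cdots = \alpha_{d-2} = 0$ by Lemma~\ref{oddgirthlemma}) but we are handed the equality $\gamma_d = -(\lambda_1 + \cdots + \lambda_d)$ directly. Running the same computation backwards, the equality gives $\alpha_d = \lambda_1 + \cdots + \lambda_d$, and the trace identity together with $\alpha_1 = \cdots = \alpha_{d-2} = 0$ forces $\alpha_{d-1} = 0$ as well; since $G$ is non-bipartite and $\alpha_0 = \cdots = \alpha_{d-1} = 0$, Lemma~\ref{oddgirthlemma} again gives $\alpha_d \neq 0$ and odd-girth exactly $2d+1$, so the odd-girth theorem applies verbatim to conclude $G$ is a generalized Odd graph. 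The main obstacle I anticipate is not the arithmetic—which is a short bookkeeping of the two linear identities in Lemma~\ref{ortho-pol}$(ii)$ and Lemma~\ref{properties-preintersectionnumbers}$(ii)$—but rather pinning down, in the equality case, that one genuinely lands in the hypotheses of the odd-girth theorem (the correct count of vanishing $\alpha_i$'s and the non-vanishing of $\alpha_d$); once that is verified, distance-regularity is imported rather than proved, so the real content is the reduction to the odd-girth condition.
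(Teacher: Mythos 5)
Your overall route is exactly the paper's: combine the trace identity $\alpha_0+\cdots+\alpha_d=\lambda_0+\cdots+\lambda_d$ of Lemma~\ref{properties-preintersectionnumbers}$(ii)$ with $\alpha_d+\gamma_d=\lambda_0$ from Lemma~\ref{ortho-pol}$(ii)$, reduce everything to the vanishing of $\alpha_0,\ldots,\alpha_{d-1}$, and import distance-regularity from the odd-girth theorem. This is the same reduction the paper performs, and you correctly identify it as the real content.

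However, as written your bookkeeping contains two false identities that happen to compensate. First, since $\alpha_0=0$, the trace identity gives $\alpha_1+\cdots+\alpha_d=\lambda_0+\lambda_1+\cdots+\lambda_d$, \emph{not} $\lambda_1+\cdots+\lambda_d$; you have dropped a $\lambda_0$ from the right-hand side. Second, your justification ``$\lambda_0+\lambda_1+\cdots+\lambda_d=\tr\A=0$'' is wrong: $\tr\A=\sum_i m_i\lambda_i$ is the multiplicity-weighted sum, whereas $\lambda_0+\cdots+\lambda_d$ is the sum of the \emph{distinct} eigenvalues, which is in general nonzero---the paper notes right after Lemma~\ref{properties-preintersectionnumbers} that it can even be negative (graph no.\ 3.83 of \cite{cds82}). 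Consequently your chain $\gamma_d=\lambda_0-\alpha_d\ge\lambda_0-(\lambda_1+\cdots+\lambda_d)=-(\lambda_1+\cdots+\lambda_d)$ has a false last equality (it would require $\lambda_0=0$). The two errors cancel: the correct bound is $\alpha_d\le\lambda_0+\lambda_1+\cdots+\lambda_d$, from which $\gamma_d=\lambda_0-\alpha_d\ge-(\lambda_1+\cdots+\lambda_d)$ follows directly, which is precisely the paper's computation. The same $\lambda_0$ must be reinstated in your equality analysis for $(i)$, in the converse direction, and in $(ii)$, where the correct identity is $\alpha_{d-1}+\alpha_d=\lambda_0+\cdots+\lambda_d$, yielding $\gamma_d-\alpha_{d-1}=-(\lambda_1+\cdots+\lambda_d)$ and hence $\alpha_{d-1}=0$ as in the paper. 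With these repairs your argument coincides with the published proof; without them, several displayed steps are literally false even though every final conclusion is right.
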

\begin{proof}
We will use that $\alpha_{0}+\cdots+\alpha_d=\lambda_0+\cdots+\lambda_d$ (by Lemma \ref{properties-preintersectionnumbers}$(ii)$) and $\alpha_d+\gamma_d=\lambda_0$ (by Lemma \ref{ortho-pol}$(ii)$ and recalling that $\beta_d=0$). To show $(i)$, observe that the hypothesis now implies that
$$
\gamma_d=\lambda_0-\alpha_d=-(\lambda_1+\cdots + \lambda_d)+(\alpha_0+\cdots+\alpha_{d-1})\ge -(\lambda_1+\cdots + \lambda_d),
$$
with equality if and only if $\alpha_0=\cdots =\alpha_{d-1}=0$. Because $G$ is not bipartite, this is equivalent to the odd-girth of $G$ being $2d+1$, and so $(i)$ follows from the odd-girth theorem.

To show $(ii)$, note that by Lemma \ref{oddgirthlemma} we have that $\alpha_0=\cdots=\alpha_{d-2}=0$, and hence $\alpha_{d-1}+\alpha_d=\lambda_0+\cdots+\lambda_d$. This implies that
\begin{equation}
\label{gamma-alpha}
\gamma_d-\alpha_{d-1}=-(\lambda_1+\cdots+\lambda_d),
\end{equation}
and so, by the assumption, $\alpha_{d-1}= 0$. Hence $G$ has odd-girth $2d+1$, and $(ii)$ follows, again by the odd-girth theorem.
\end{proof}

We will make further use of \eqref{gamma-alpha} in subsequent sections on graphs with large girth. There (Theorem \ref{theo-girth+}) we will also present a variation of Theorem \ref{thm:variationoddgirththeorem}$(i)$.

Of course, one of the cases (but certainly not the only one) where the hypothesis that $\alpha_i \geq 0$ for $i=0,\ldots,d-1$ holds, is when $G$ is cospectral with a distance-regular graph. However, as we mentioned above, the hypothesis is not satisfied in general.

In contrast to the above, if $G$ is bipartite, then $\gamma_d=-(\lambda_1+\cdots+\lambda_d)$, but in general we cannot conclude that $G$ is distance-regular. A counterexample is the Hoffman graph \cite{hof63}, which is cospectral with the distance-regular $4$-cube
$Q_4$, and hence it is bipartite with $d=4$ (because $\alpha_0=\cdots=\alpha_4=0$). The Hoffman graph is not distance-regular however.

\subsection{Distance-regularity from large girth}
\label{dr-lgirth}
From now on, we will use basic results on partially distance-regular graphs, whose definition is as follows.
A graph $G$ with diameter $D$ is called {\em $m$-partially distance-regular}, for some $m=0,\dots, D$, if its predistance polynomials satisfy $p_i(\A)=\A_i$ for every $i\le m$. In particular,  every $m$-partially distance-regular with $m\ge 1$ must be regular. This is because $p_1(\A)=\A$ and hence $p_1=x$ is equivalent to $G$ being regular, by Lemma \ref{lemma_regulargraph}. As an alternative characterization, we have that $G$ is $m$-partially distance-regular when the intersection numbers $c_i$ ($i \leq m$), $a_i$ ($i \leq m-1$), $b_i$ ($i \leq m-1$) are well-defined.
In this case, these intersection numbers are equal to the corresponding preintersection numbers $\gamma_i$ ($i \leq m$), $\alpha_i$ ($i \leq m-1$), $\beta_i$ ($i \leq m-1$), and also $k_i$ is well-defined and equal to $p_i(\lambda_0)$ for $i \le m$.  We refer to Dalf\'{o}, Van Dam,  Fiol, Garriga, and Gorissen \cite{ddfgg11} for more background.
Our second main result uses the two following results from \cite{ddfgg11}.
\begin{lema}
\label{lema-girth}
Let $G$ be a regular graph with girth $g$.
Then $G$ is $m$-partially distance-regular with $m = \lfloor (g-1)/2\rfloor$ and intersection numbers $a_i=0$ for $i=0,\dots,m-1$ and $c_i=1$ for $i=1,\dots,m$.
\end{lema}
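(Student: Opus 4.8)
The plan is to show that a regular graph of girth $g$ must behave, up to distance $m=\lfloor(g-1)/2\rfloor$, exactly like a distance-regular graph, by exploiting the fact that no short cycles force local uniqueness of paths. The key combinatorial observation is that if $g\ge 2i+1$ (equivalently $i\le m$), then for any two vertices $u,v$ at distance $i$ there is a \emph{unique} shortest path joining them; otherwise two distinct geodesics of length $i$ would create a closed walk of length at most $2i<g$ containing a cycle shorter than the girth. From this I would first deduce that $c_i(u,v)=1$ for all such pairs (the unique predecessor along the geodesic is the only common neighbour of $v$ in $S_{i-1}(u)$), so $c_i$ is well-defined and equal to $1$ for $i=1,\dots,m$. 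Similarly, a triangle-free--type argument at the relevant scale gives $a_i(u,v)=0$: a common neighbour in $S_i(u)$ would again produce a short cycle. Hence $a_i=0$ is well-defined for $i=0,\dots,m-1$.

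The second step is to upgrade these well-defined intersection numbers to the statement that $G$ is $m$-partially distance-regular. Here I would invoke the alternative characterization recalled just before Lemma \ref{lema-girth}: namely that $G$ is $m$-partially distance-regular precisely when $c_i$ ($i\le m$), $a_i$ ($i\le m-1$), and $b_i$ ($i\le m-1$) are all well-defined. Since $G$ is regular and I have shown $c_i$ and $a_i$ are well-defined in the required ranges, the relation $c_i(u,v)+a_i(u,v)+b_i(u,v)=k_1(v)=k$ (noted in the preliminaries) forces $b_i$ to be well-defined as well, for $i=0,\dots,m-1$. This exactly matches the characterization, so $G$ is $m$-partially distance-regular, and by that same characterization the preintersection numbers satisfy $\gamma_i=c_i$ and $\alpha_i=a_i$ in the stated ranges.

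The careful part is pinning down the correct distance threshold: I must verify that $g\ge 2i+1$ is what guarantees uniqueness of $i$-geodesics, and that this is equivalent to $i\le\lfloor(g-1)/2\rfloor=m$, while the vanishing of $a_i$ only needs the slightly weaker bound available for $i\le m-1$. I would track these index ranges explicitly, since an off-by-one error between the $c_i$ range ($1\le i\le m$) and the $a_i$, $b_i$ ranges ($0\le i\le m-1$) is the main pitfall. The main obstacle, then, is not any deep estimate but the clean bookkeeping of the girth inequalities: I must argue that any configuration contradicting $c_i=1$ or $a_i=0$ yields a closed walk whose shortest embedded cycle has length strictly less than $g$, and confirm that this length bound is tight enough to apply in precisely the claimed ranges of $i$.
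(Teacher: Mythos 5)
Your argument is correct, but note that the paper does not actually prove this lemma: it is imported verbatim from \cite{ddfgg11} ("Our second main result uses the two following results from \cite{ddfgg11}"), so there is no in-text proof to compare against. What you have written is essentially the standard argument that appears in that reference. Your girth bookkeeping checks out: for $i\le m=\lfloor (g-1)/2\rfloor$ one has $2i\le g-1$, so two distinct neighbours of $v$ in $S_{i-1}(u)$ would give two distinct $u$--$v$ paths of length $i$ whose union contains a cycle of length at most $2i<g$, whence $c_i=1$; and for $i\le m-1$ one has $2i+1\le 2m-1<g$, so a neighbour of $v$ in $S_i(u)$ would close an odd walk of length $2i+1$, which contains an odd cycle of length less than $g$, whence $a_i=0$. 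Regularity then forces $b_i(u,v)=k-c_i(u,v)-a_i(u,v)$ to be constant in the required range, and the characterization of $m$-partial distance-regularity via well-defined $c_i$ ($i\le m$) and $a_i,b_i$ ($i\le m-1$), stated just before the lemma, completes the proof. Two points you should make explicit to be fully rigorous: first, the definition of $m$-partial distance-regularity presupposes $m\le D$, which holds because a shortest cycle is isometrically embedded, so $D\ge\lfloor g/2\rfloor\ge m$; second, the passage from well-defined intersection numbers to $p_i(\A)=\A_i$ is precisely the "alternative characterization" that the paper itself quotes without proof from \cite{ddfgg11}, so your proof is self-contained only modulo that equivalence.
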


\begin{propo}
\label{propo-pdrg=drg}
Let $G$ be a graph with $d+1$ distinct eigenvalues.
\begin{itemize}
\item[$(i)$]
If $G$ is $(d-1)$-partially distance-regular, then $G$ is distance-regular,
\item[$(ii)$]
If $G$ is bipartite and $(d-2)$-partially distance-regular, then $G$ is distance-regular.
\end{itemize}
\end{propo}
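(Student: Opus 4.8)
The plan is to exploit the single global identity $p_0+\cdots+p_d=H$ from Lemma~\ref{ortho-pol}$(iv)$ together with Hoffman's characterization $H(\A)=\J$ of regularity, and to use repeatedly that the minimal polynomial $\mu$ of $\A$ has degree $d+1$, so that no nonzero polynomial of degree at most $d$ can annihilate $\A$. In the range of interest $G$ is $m$-partially distance-regular with $m\ge 1$ (that is, $d\ge 2$ in $(i)$ and $d\ge 3$ in $(ii)$), hence regular; I work under this regularity assumption, so that $H(\A)=\J=\sum_{i=0}^{D}\A_i$.

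For $(i)$, partial distance-regularity gives $p_i(\A)=\A_i$ for $i\le d-1$. I would first observe that $D\ge d-1$: if $D<d-1$, then $\A_i=\vec0$ for some $i$ with $D<i\le d-1$, forcing $p_i(\A)=\vec0$ for a nonzero polynomial $p_i$ of degree $i\le d$, a contradiction. Since $\sum_{i=0}^{d-1}p_i(\A)=\sum_{i=0}^{d-1}\A_i$, subtracting this from $H(\A)=\J$ yields
\[
p_d(\A)=\J-\sum_{i=0}^{d-1}\A_i .
\]
If $D=d-1$ the right-hand side equals $\J-\J=\vec0$, which is impossible as $\dgr p_d=d<d+1$. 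Hence $D=d$, the right-hand side equals $\A_d$, so $p_d(\A)=\A_d$; together with the hypotheses this gives $p_i(\A)=\A_i$ for all $i\le d$, i.e. $G$ is distance-regular.

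For $(ii)$ the same bookkeeping leaves two unknown top terms, and the extra ingredient is the bipartite parity of the predistance polynomials. Since $G$ is bipartite, Lemma~\ref{oddgirthlemma} gives $\alpha_i=0$ for all $i$, so the recurrence \eqref{3-term-recur} reduces to $xp_i=\beta_{i-1}p_{i-1}+\gamma_{i+1}p_{i+1}$; with $p_0=1$ and $p_1=x$ this forces $p_i(-x)=(-1)^i p_i(x)$ by induction. Let $\MS$ be the diagonal $\pm 1$ matrix signing the two colour classes, so that $\MS\A\MS=-\A$ and hence $\MS\,p_i(\A)\,\MS=p_i(-\A)=(-1)^i p_i(\A)$, while $\MS\A_j\MS=(-1)^j\A_j$ because two vertices at distance $j$ lie in the same class exactly when $j$ is even. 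Writing $\M:=\J-\sum_{i=0}^{d-2}\A_i$, the identity $H(\A)=\J$ now gives $p_{d-1}(\A)+p_d(\A)=\M=\sum_{i\ge d-1}\A_i$, and as in $(i)$ one checks $D\ge d-2$.

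The endgame is a short case analysis on $D\in\{d-2,d-1,d\}$, each case resolved by conjugating the displayed identity by $\MS$ and combining it (after rescaling by $(-1)^{d-1}$) with the original. If $D=d-2$ then $\M=\vec0$, giving $(p_{d-1}+p_d)(\A)=\vec0$ with $\dgr(p_{d-1}+p_d)=d$, impossible. If $D=d-1$ then $\M=\A_{d-1}$; the conjugated identity reads $p_{d-1}(\A)-p_d(\A)=\A_{d-1}$, and subtracting forces $p_d(\A)=\vec0$, again impossible. Hence $D=d$ and $\M=\A_{d-1}+\A_d$; the original identity and its $\MS$-conjugate form a $2\times 2$ linear system whose solution is $p_{d-1}(\A)=\A_{d-1}$ and $p_d(\A)=\A_d$, so $G$ is distance-regular. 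The only genuinely delicate point is establishing the parity relations $p_i(-x)=(-1)^i p_i(x)$ and $\MS\A_j\MS=(-1)^j\A_j$; once these are in place, the separation of $p_{d-1}(\A)$ from $p_d(\A)$, and hence the whole argument, is immediate.
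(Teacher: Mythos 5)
Your proof is correct. Note that the paper does not prove this proposition at all: it is imported verbatim from \cite{ddfgg11}, so there is no in-text argument to compare against. Your route is the standard one for this result and is sound in every step: part $(i)$ is exactly the Hoffman-polynomial bookkeeping ($p_d(\A)=\J-\sum_{i<d}\A_i$, with the degree of the minimal polynomial ruling out $D\le d-1$), and part $(ii)$ correctly supplies the extra ingredient needed to separate the two leftover terms, namely the parity of the predistance polynomials of a bipartite graph, implemented via conjugation by the signing matrix $\MS$ (equivalently, one could argue entrywise using that $(\A^j)_{uv}=0$ unless $j\equiv\dist(u,v)\pmod 2$, which is how this is usually phrased in \cite{vdh02,ddfgg11}). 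The inductive proof of $p_i(-x)=(-1)^ip_i(x)$ from the recurrence with all $\alpha_i=0$, and the identity $\MS\A_j\MS=(-1)^j\A_j$ for a connected bipartite graph, are both verified correctly, and the three-way case analysis on $D$ closes cleanly.

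One small remark: you restrict to $d\ge 2$ in $(i)$ and $d\ge 3$ in $(ii)$ so as to have $1$-partial distance-regularity and hence regularity, which you need for $H(\A)=\J$. For $(i)$ with $d=1$ the statement is trivially true (a connected graph with two distinct eigenvalues is complete), so nothing is lost there. For $(ii)$ with $d=2$ the hypothesis is vacuous and the literal statement would fail for $K_{m,n}$ with $m\ne n$; the proposition is only ever invoked in the paper under hypotheses that force regularity, so your reading is the intended one, but it would be worth stating this caveat explicitly rather than leaving it implicit in the phrase ``the range of interest.''
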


The cases $(i)$ and $(ii)$ of Theorem \ref{theo-book} can now easily be generalized as
follows.

\begin{theo}
\label{theo-girth}
A regular graph $G$ with girth $g$
is distance-regular if either one of the following conditions holds:
\begin{itemize}
\item[$(i)$]
 $g \ge 2d-1$,
\item[$(ii)$]
$g \ge 2d-2$ and $G$ is bipartite.
\end{itemize}
\end{theo}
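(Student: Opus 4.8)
The plan is to combine the two imported results from \cite{ddfgg11}, namely Lemma \ref{lema-girth} and Proposition \ref{propo-pdrg=drg}, so that essentially all that remains is an elementary floor computation relating the girth to the degree of partial distance-regularity. The genuine content lives in those two lemmas; this theorem is meant to be the clean packaging of their combination.

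First I would invoke Lemma \ref{lema-girth}: since $G$ is regular with girth $g$, it is automatically $m$-partially distance-regular with $m=\lfloor (g-1)/2\rfloor$. Being $m$-partially distance-regular means $p_i(\A)=\A_i$ for all $i\le m$, so a graph that is $m$-partially distance-regular is also $m'$-partially distance-regular for every $m'\le m$. Hence it suffices to verify that the value of $m$ coming from the girth hypothesis is large enough to trigger the appropriate case of Proposition \ref{propo-pdrg=drg}.

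For $(i)$, the inequality $g\ge 2d-1$ gives $(g-1)/2\ge d-1$, and since $d-1$ is an integer this yields $m=\lfloor(g-1)/2\rfloor\ge d-1$. Thus $G$ is $(d-1)$-partially distance-regular, and Proposition \ref{propo-pdrg=drg}$(i)$ immediately delivers distance-regularity. For $(ii)$, the weaker bound $g\ge 2d-2$ gives $(g-1)/2\ge d-3/2$, so $m\ge d-2$; together with the bipartiteness hypothesis, Proposition \ref{propo-pdrg=drg}$(ii)$ closes the argument.

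As for the main obstacle: at the level of this statement there is essentially none, because the hard work is concealed inside the two cited results. The only point deserving a moment's care is the floor bookkeeping, specifically confirming that the weaker hypothesis $g\ge 2d-2$ in the bipartite case still forces $m\ge d-2$. Here one uses that the girth of a bipartite graph is even, so that the extremal value $g=2d-2$ already gives $m=\lfloor(2d-3)/2\rfloor=d-2$. I would check these boundary cases (and likewise $g=2d-1$ in part $(i)$, where $m=d-1$ exactly) explicitly, to be sure no off-by-one error slips in.
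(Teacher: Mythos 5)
Your proposal is correct and follows exactly the paper's argument: apply Lemma \ref{lema-girth} to get $(d-1)$- (resp.\ $(d-2)$-) partial distance-regularity from the girth bound, then conclude via Proposition \ref{propo-pdrg=drg}. The extra floor-function bookkeeping you spell out is sound and merely makes explicit what the paper leaves implicit.
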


\begin{proof}
$(i)$ If $g \ge 2d-1$, then $G$ is $(d-1)$-partially distance-regular by Lemma \ref{lema-girth}, and the result follows from Proposition \ref{propo-pdrg=drg}$(i)$. The proof of $(ii)$ is similar by using Proposition \ref{propo-pdrg=drg}$(ii)$.
\end{proof}

We recall that the condition of being bipartite follows from the spectrum, and also the girth of a regular graph is determined by the
spectrum. Thus, the assumptions in Theorem \ref{theo-girth} only depend on the spectrum of $G$.

\subsection{Distance-regularity from the (pre)intersection numbers}
\label{dr-preintnum}
In this section we will give several characterizations of distance-regularity that involve the preintersection numbers.
With this aim in mind, we start with deriving some properties of the preintersection numbers of $(m-1)$-partially distance-regular graphs.

\begin{lema}
\label{lema:basic00}
Let $G$ be a regular graph
and let $m\le D$ be a positive integer. Suppose that $G$ is $(m-1)$-partially distance-regular. Then the following properties hold:
\begin{itemize}
\item[$(i)$]
 $\alpha_{m-1}=\overline{a}_{m-1}$
 and $\beta_{m-1}=\overline{b}_{m-1}=\frac{\overline{k}_m\overline{c}_m}{k_{m-1}}$,
\item[$(ii)$]
$k_{m-1}\alpha_{m-1}^2+ p_m(\lambda_0)\gamma_m^2 = k_{m-1}\overline{a_{m-1}^2}+\overline{k}_m \overline{c_m^2}$,
\item[$(iii)$]
$p_m(\lambda_0)\gamma_{m}^2\ge \overline{k}_m\overline{c_m^2}$, with equality if and only if $a_{m-1}$ is well-defined,
\item[$(iv)$]
if $a_{m-1}$ is well defined, then $\gamma_{m}=\frac{\overline{c_m^2}}{\overline{c}_m}$.
\end{itemize}
\end{lema}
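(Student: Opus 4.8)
The plan is to exploit the bridge between the two inner products, namely $\langle p,q\rangle_G=\langle p(\A),q(\A)\rangle$, together with the defining hypothesis that $p_i(\A)=\A_i$ for every $i\le m-1$. Since $G$ is $(m-1)$-partially distance-regular, the numbers $c_i$ $(i\le m-1)$, $a_i,b_i$ $(i\le m-2)$ and $k_i$ $(i\le m-1)$ are well-defined, so that $\gamma_i=c_i$, $\alpha_i=a_i$ and $\beta_i=b_i$ in these ranges and $\overline{k}_{i}=k_i=p_i(\lambda_0)$ for $i\le m-1$. I would use these identifications throughout, keeping careful track of which quantities are genuinely well-defined (so that one may write $k_{m-1}$) and which remain true averages (so that only $\overline{k}_m$ is available).

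For $(i)$, I would start from $\alpha_{m-1}=\langle xp_{m-1},p_{m-1}\rangle_G/\|p_{m-1}\|_G^2$. Replacing $p_{m-1}(\A)$ by $\A_{m-1}$ turns this into $\langle \A\A_{m-1},\A_{m-1}\rangle/\|\A_{m-1}\|^2$, which is $\overline{a}_{m-1}$ by Lemma \ref{lem:averages}$(iii)$. To reach $\beta_{m-1}$, I would combine $\alpha_{m-1}+\beta_{m-1}+\gamma_{m-1}=\lambda_0$ (Lemma \ref{ortho-pol}$(ii)$) with the combinatorial identity $\overline{a}_{m-1}+\overline{b}_{m-1}+\overline{c}_{m-1}=\lambda_0$ (from $c+a+b=k$ and regularity); since $\gamma_{m-1}=c_{m-1}=\overline{c}_{m-1}$ and $\alpha_{m-1}=\overline{a}_{m-1}$, subtraction gives $\beta_{m-1}=\overline{b}_{m-1}$. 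The closed form then follows from the symmetry $\langle \A\A_m,\A_{m-1}\rangle=\langle \A\A_{m-1},\A_m\rangle$, which holds because the distance matrices are symmetric and the trace is cyclic; rewriting both sides via Lemma \ref{lem:averages}$(iii)$ and dividing by $\overline{k}_{m-1}=k_{m-1}$ yields $\beta_{m-1}=\overline{k}_m\overline{c}_m/k_{m-1}$.

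For $(ii)$, the idea is to equate two expansions of a single norm. Because $p_{m-1}(\A)=\A_{m-1}$, we have $\|xp_{m-1}\|_G^2=\|\A\A_{m-1}\|^2$; I would expand the left-hand side by Lemma \ref{lem:squares}$(ii)$ and the right-hand side by Lemma \ref{lem:squares}$(i)$, both with $i=m-1$. The terms $p_{m-2}(\lambda_0)\beta_{m-2}^2$ and $\overline{k}_{m-2}\overline{b_{m-2}^2}$ then coincide, since $b_{m-2}$ and $k_{m-2}$ are well-defined (whence $\overline{b_{m-2}^2}=\beta_{m-2}^2$ and $\overline{k}_{m-2}=p_{m-2}(\lambda_0)$); cancelling them and using $p_{m-1}(\lambda_0)=k_{m-1}$ leaves exactly the asserted identity. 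Finally, $(iii)$ and $(iv)$ drop out: rearranging $(ii)$ and substituting $\alpha_{m-1}=\overline{a}_{m-1}$ from $(i)$ gives $p_m(\lambda_0)\gamma_m^2-\overline{k}_m\overline{c_m^2}=k_{m-1}(\overline{a_{m-1}^2}-\overline{a}_{m-1}^2)$, where $k_{m-1}=p_{m-1}(\lambda_0)>0$ and the remaining factor is a variance, hence nonnegative, vanishing precisely when $a_{m-1}$ is well-defined, which is $(iii)$. For $(iv)$, the equality case reads $p_m(\lambda_0)\gamma_m^2=\overline{k}_m\overline{c_m^2}$; combining $p_{m-1}(\lambda_0)\beta_{m-1}=p_m(\lambda_0)\gamma_m$ (Lemma \ref{ortho-pol}$(iii)$) with $\beta_{m-1}=\overline{k}_m\overline{c}_m/k_{m-1}$ from $(i)$ yields $p_m(\lambda_0)\gamma_m=\overline{k}_m\overline{c}_m$, and dividing the two relations (using $\gamma_m>0$ by Lemma \ref{properties-preintersectionnumbers}$(i)$ and $\overline{c}_m\ge 1$) gives $\gamma_m=\overline{c_m^2}/\overline{c}_m$.

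The main obstacle I anticipate is bookkeeping rather than ingenuity: one must scrupulously distinguish the well-defined quantities from the averages, since this is exactly what makes the cancellation in $(ii)$ exact and what governs the equality conditions in $(iii)$ and $(iv)$. The only genuinely nonroutine step is recognizing the trace-symmetry identity $\langle \A\A_m,\A_{m-1}\rangle=\langle \A\A_{m-1},\A_m\rangle$, which supplies the closed form for $\beta_{m-1}$ and, through Lemma \ref{ortho-pol}$(iii)$, drives the passage from the quadratic relation to the linear formula for $\gamma_m$.
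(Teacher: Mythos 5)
Your proposal is correct and follows essentially the same route as the paper's own proof: identifying $\alpha_{m-1}$ with $\overline{a}_{m-1}$ via $p_{m-1}(\A)=\A_{m-1}$, using the sum-to-$\lambda_0$ identities and the trace symmetry $\langle \A\A_m,\A_{m-1}\rangle=\langle \A_m,\A\A_{m-1}\rangle$ for the closed form of $\beta_{m-1}$, equating the two expansions of $\|\A\A_{m-1}\|^2=\|xp_{m-1}\|_G^2$ from Lemma \ref{lem:squares} for $(ii)$, and then the variance argument and Lemma \ref{ortho-pol}$(iii)$ for $(iii)$ and $(iv)$. No gaps.
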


\begin{proof}

Since $G$ is $(m-1)$-partially distance-regular, among others the intersection numbers $c_{m-1}$ and $b_{m-2}$ are well-defined and equal to $\gamma_{m-1}$ and $\beta_{m-2}$, respectively, and also $k_{m-2}$ and $k_{m-1}$ are well-defined and equal to $p_{m-2}(\lambda_0)$ and  $p_{m-1}(\lambda_0)$, respectively. Moreover, $p_{m-1}(\A) = \A_{m-1}$.

$(i)$ By using Lemma \ref{lem:averages}$(iii)$, it now follows that
$$\alpha_{m-1} =\frac{\langle x p_{m-1},p_{m-1} \rangle_G}{\|p_{m-1}\|^2_G}=\frac{\langle \A\A_{m-1}, \A_{m-1} \rangle}{\|\A_{m-1}\|^2}=\overline{a}_{m-1}.$$

From this it follows that $\beta_{m-1}=k-\gamma_{m-1}-\alpha_{m-1}=k-c_{m-1}-\overline{a}_{m-1}=\overline{b}_{m-1}$, where we also used Lemma
\ref{ortho-pol}$(ii)$ and that $G$ is regular with valency $k=\lambda_0$. Moreover, by Lemma \ref{lem:averages}$(ii)$ and $(iii)$,
$k_{m-1}\overline{b}_{m-1} = \langle \A\A_m,\A_{m-1} \rangle= \langle \A_m,\A\A_{m-1} \rangle=\overline{k}_m\overline{c}_{m}$, hence $\overline{b}_{m-1}=\overline{k}_m\overline{c}_m/k_{m-1}$.

$(ii)$ This follows from Lemma \ref{lem:squares} and working out the equation $\| \A\A_{m-1} \|^2=\| xp_{m-1} \|_G^2$, while using that $\overline{k}_{m-2}=p_{m-2}(\lambda_0)$, $\overline{b_{m-2}^2}=\beta_{m-2}^2$ (because $b_{m-2}$ is well-defined), and $\overline{k}_{m-1}=p_{m-1}(\lambda_0)=k_{m-1}$.

$(iii)$ By using $(i)$, it follows that
$$
\overline{a_{m-1}^2} \ge (\overline{a}_{m-1})^2= \alpha_{m-1}^2,
$$
with equality if and only if $a_{m-1}$ is well-defined (and $a_{m-1}=\alpha_{m-1}$).
The statement now follows from combining this with $(ii)$.

$(iv)$ Using $(i)$ and Lemma \ref{ortho-pol}$(iii)$, we obtain that $p_m(\lambda_0)\gamma_m =
p_{m-1}(\lambda_0)\beta_{m-1}=k_{m-1}\beta_{m-1}=\overline{k}_m
\overline{c}_m$. Thus, from this and $(iii)$,
\begin{align*}
\overline{k}_m \overline{c_m^2}= p_m(\lambda_0) \gamma_m^2 = \overline{k}_m  \overline{c}_m \gamma_m,
\end{align*}
whence the result follows.
\end{proof}

The following observation is the key to many of our results. It is motivated by the spectral excess theorem, and will be used to prove Proposition \ref{theo:basic}.

\begin{propo}
\label{theo:excesspdr}
Let $G$ be a regular graph with diameter $D$,
and let $m\le D$ be a positive integer. If $G$ is $(m-1)$-partially distance-regular, then $\overline{k}_m \geq p_m(\lambda_0)$ with equality if and only if $G$ is $m$-partially distance-regular.
\end{propo}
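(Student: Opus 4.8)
The plan is to realize $\overline{k}_m-p_m(\lambda_0)$ as a squared norm, so that both the inequality and its equality case fall out of a single Pythagorean identity, exactly in the spirit of the spectral excess theorem that motivates this proposition. The two matrices to compare are the genuine distance matrix $\A_m$ and the ``predicted'' matrix $p_m(\A)$; note that $\|\A_m\|^2=\overline{k}_m$ by Lemma \ref{lem:averages}$(ii)$, and $\|p_m(\A)\|^2=\|p_m\|_G^2=p_m(\lambda_0)$ by the normalization of the predistance polynomials. Thus the statement is equivalent to $\|\A_m\|^2\ge\|p_m(\A)\|^2$, with equality if and only if $\A_m=p_m(\A)$, which, since $G$ is already $(m-1)$-partially distance-regular (so $p_i(\A)=\A_i$ for $i\le m-1$), is precisely $m$-partial distance-regularity.

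The crux is the single scalar identity $\langle\A_m,p_m(\A)\rangle=p_m(\lambda_0)$. To obtain it I would apply the three-term recurrence \eqref{3-term-recur} to $\A$; since $G$ is $(m-1)$-partially distance-regular we have $p_{m-1}(\A)=\A_{m-1}$ and $p_{m-2}(\A)=\A_{m-2}$, so
\begin{equation*}
\gamma_m\,p_m(\A)=\A\A_{m-1}-\alpha_{m-1}\A_{m-1}-\beta_{m-2}\A_{m-2}.
\end{equation*}
Taking the scalar product with $\A_m$ and using that $\A_m$ is orthogonal to both $\A_{m-1}$ and $\A_{m-2}$ (disjoint supports) leaves only $\gamma_m\langle\A_m,p_m(\A)\rangle=\langle\A\A_{m-1},\A_m\rangle=\overline{c}_m\,\overline{k}_m$ by Lemma \ref{lem:averages}$(ii)$--$(iii)$. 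Finally, combining Lemma \ref{lema:basic00}$(i)$ with Lemma \ref{ortho-pol}$(iii)$ gives the first-moment relation $p_m(\lambda_0)\gamma_m=p_{m-1}(\lambda_0)\beta_{m-1}=k_{m-1}\beta_{m-1}=\overline{k}_m\overline{c}_m$, so dividing by $\gamma_m>0$ (positive by Lemma \ref{properties-preintersectionnumbers}$(i)$, as $1\le m\le d$) yields the claimed identity.

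With this identity in hand the proof closes quickly: since $\langle\A_m,p_m(\A)\rangle=p_m(\lambda_0)=\|p_m(\A)\|^2$, the difference $\A_m-p_m(\A)$ is orthogonal to $p_m(\A)$, whence
\begin{equation*}
\overline{k}_m=\|\A_m\|^2=\|p_m(\A)\|^2+\|\A_m-p_m(\A)\|^2\ge\|p_m(\A)\|^2=p_m(\lambda_0),
\end{equation*}
with equality if and only if $\A_m=p_m(\A)$, i.e. $G$ is $m$-partially distance-regular. The main obstacle is precisely the scalar identity $\langle\A_m,p_m(\A)\rangle=p_m(\lambda_0)$: everything hinges on the cross terms from the recurrence vanishing (which needs exactly the $(m-1)$-partial distance-regularity in order to replace $p_{m-1}(\A),p_{m-2}(\A)$ by $\A_{m-1},\A_{m-2}$ and to invoke the first-moment relation). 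Once these are secured, the Pythagorean step is automatic and delivers the equality characterization for free; I would in particular avoid any appeal to Cauchy--Schwarz in the style of Lemma \ref{lema:basic00}$(iii)$, since the orthogonal-decomposition route is cleaner and pins down the equality case directly as $\A_m=p_m(\A)$.
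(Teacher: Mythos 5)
Your proof is correct, and it shares the paper's overall skeleton: both arguments reduce everything to the single identity $\langle p_m(\A),\A_m\rangle=p_m(\lambda_0)$ and then compare norms. The difference lies in how that identity is obtained. The paper's route is global: it observes that $\langle p_m(\A),\A_i\rangle=0$ for all $i\neq m$ (for $i<m$ by orthogonality of the $p_i$ and the hypothesis $p_i(\A)=\A_i$; for $i>m$ because $p_m$ has degree $m$, so $(p_m(\A))_{uv}=0$ beyond distance $m$), and then evaluates $\langle p_m(\A),\J\rangle$ via the Hoffman polynomial $H(\A)=\J$ and Lemma \ref{ortho-pol}$(iv)$. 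Your route is local: you apply the three-term recurrence \eqref{3-term-recur} at step $m$, use disjointness of supports to kill the $\A_{m-1}$ and $\A_{m-2}$ terms, and close with the first-moment relation $p_m(\lambda_0)\gamma_m=\overline{k}_m\overline{c}_m$ from Lemma \ref{lema:basic00}$(i)$ and Lemma \ref{ortho-pol}$(iii)$ (no circularity, since that lemma precedes the proposition and does not depend on it). Your version buys a slightly more economical treatment of the equality case, since the orthogonal decomposition gives $\A_m=p_m(\A)$ directly, whereas the paper's Cauchy--Schwarz argument first yields $p_m(\A)=\alpha\A_m$ and must then argue $\alpha=1$ by taking norms; the paper's version buys independence from the averaged intersection-number machinery of Lemma \ref{lema:basic00} and makes the link to the Hoffman polynomial (and hence to the spectral excess theorem) explicit. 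Both are valid proofs of the same inequality.
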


\begin{proof} Assume that $G$ is $(m-1)$-partially distance-regular. Then $p_i(\A)=\A_i$ and hence $\langle p_m(\A), \A_i\rangle=\langle p_m, p_i\rangle_G=0$ for $i<m$.
Moreover, $(p_m(\A))_{uv}=0$ for every pair of vertices $u,v$ at distance $i>m$ and hence $\langle p_m(\A), \A_i\rangle=0$ also for $i>m$.
This implies that
$$\langle p_m(\A),\A_m\rangle = \langle p_m(\A),\J\rangle=\langle
p_m,H\rangle_G=\langle
p_m,p_0+\cdots+p_d\rangle_G=\langle
p_m,p_m\rangle_G=p_m(\lambda_0),$$
where we used Lemma \ref{ortho-pol}$(iv)$ and that $H(\A)=\J$. Then, by the Cauchy-Schwarz
inequality, $p^2_m(\lambda_0) \le \|p_m(\A)\|^2\|\A_m\|^2 = p_m(\lambda_0)\overline{k}_m$, and hence $\overline{k}_m \geq p_m(\lambda_0)$.
Furthermore, in the case of equality, $p_m(\A)=\alpha \A_m$ for some $\alpha\in \Re$, and by taking norms we get that
$\alpha=1$ since $p_m(\lambda_0)>0$.
\end{proof}

The following result generalizes some results by Van Dam and Haemers \cite{vdh02}, and Van Dam, Haemers, Koolen, and Spence \cite{vdhks06}. Several results in this section will be derived from it.

\begin{propo}
\label{theo:basic}
Let $G$ be a regular graph
and let $m\le D$ be a positive integer. Suppose that $G$ is $(m-1)$-partially distance-regular and any of the following conditions holds:
\begin{itemize}
\item[$(i)$]
$\overline{c}_m\ge \gamma_m$,
\item[$(ii)$]
$c_{m-1}\ge \gamma_m$,
\item[$(iii)$]
$k_{m-1}(\overline{a_{m-1}^2}-\alpha_{m-1}^2)+\overline{k}_m(\overline{c_m^2}-\gamma_m^2)\ge 0$,
\item[$(iv)$]
$\overline{c_m^2}\ge \gamma_m^2$,
\item[$(v)$]
$a_{m-1}$ is well-defined, and $c_m(u,v)\le \gamma_m$ for every pair of vertices $u,v$ at distance $m$.
\end{itemize}
Then $G$ is $m$-partially
distance-regular.
\end{propo}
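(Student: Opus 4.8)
The unifying idea is that all five hypotheses are engineered to force $\overline{k}_m \le p_m(\lambda_0)$. Since Proposition \ref{theo:excesspdr} already supplies $\overline{k}_m \ge p_m(\lambda_0)$, with equality characterizing $m$-partial distance-regularity, it suffices in each case to establish the reverse inequality. The two ingredients I would lean on throughout are the identity $p_m(\lambda_0)\gamma_m = \overline{k}_m\,\overline{c}_m$ (which follows from $\beta_{m-1}=\overline{k}_m\overline{c}_m/k_{m-1}$ in Lemma \ref{lema:basic00}$(i)$ together with Lemma \ref{ortho-pol}$(iii)$ and $p_{m-1}(\lambda_0)=k_{m-1}$, and needs only $(m-1)$-partial distance-regularity), together with the positivity $\gamma_m>0$ from Lemma \ref{properties-preintersectionnumbers}$(i)$.

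For $(i)$ I would substitute $\overline{c}_m\ge\gamma_m$ into this identity to obtain $p_m(\lambda_0)\gamma_m = \overline{k}_m\overline{c}_m \ge \overline{k}_m\gamma_m$ and cancel $\gamma_m>0$. For $(iv)$ the same scheme runs one level up: Lemma \ref{lema:basic00}$(iii)$ gives $p_m(\lambda_0)\gamma_m^2\ge \overline{k}_m\overline{c_m^2}\ge \overline{k}_m\gamma_m^2$, and cancelling $\gamma_m^2$ finishes. Condition $(iii)$ is the master case: rewriting Lemma \ref{lema:basic00}$(ii)$ as $k_{m-1}(\overline{a_{m-1}^2}-\alpha_{m-1}^2)=p_m(\lambda_0)\gamma_m^2-\overline{k}_m\overline{c_m^2}$ and inserting it into the hypothesis collapses everything to $(p_m(\lambda_0)-\overline{k}_m)\gamma_m^2\ge 0$. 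This also exhibits the nesting $(i)\Rightarrow(iv)\Rightarrow(iii)$, via $\overline{c_m^2}\ge(\overline{c}_m)^2$ and $\overline{a_{m-1}^2}\ge\alpha_{m-1}^2$, but each of these three cases is equally quick on its own.

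The genuinely combinatorial step is $(ii)$, which I would reduce to $(i)$ by proving the pointwise bound $c_m(u,v)\ge c_{m-1}$ for every pair $u,v$ at distance $m$. Fix a shortest path $u=x_0,x_1,\dots,x_m=v$; then $\dist(x_1,v)=m-1$, so among the neighbours of $v$ there are exactly $c_{m-1}(x_1,v)=c_{m-1}$ vertices $y$ with $\dist(x_1,y)=m-2$ (here $c_{m-1}$ is well-defined by $(m-1)$-partial distance-regularity). For each such $y$ the triangle inequality pins down $\dist(u,y)=m-1$, so $y\in S_{m-1}(u)\cap S_1(v)$ and hence $y$ is counted by $c_m(u,v)$; this inclusion gives $c_m(u,v)\ge c_{m-1}$ and, after averaging, $\overline{c}_m\ge c_{m-1}\ge\gamma_m$, which is $(i)$.

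Finally, for $(v)$ the well-definedness of $a_{m-1}$ activates Lemma \ref{lema:basic00}$(iv)$, giving $\overline{c_m^2}=\gamma_m\,\overline{c}_m$. On the other hand the hypothesis $c_m(u,v)\le\gamma_m$ yields $c_m(u,v)^2\le\gamma_m c_m(u,v)$ pairwise, hence $\overline{c_m^2}\le\gamma_m\overline{c}_m$, with equality only when every $c_m(u,v)\in\{0,\gamma_m\}$. Comparing the two shows we sit at equality, and since $c_m(u,v)\ge 1$ for vertices at distance $m$, this forces $c_m(u,v)=\gamma_m$ throughout, i.e. $c_m$ is well-defined; together with the well-defined $a_{m-1}$ and (by regularity) $b_{m-1}$, the graph is $m$-partially distance-regular. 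I expect the main obstacle to be precisely this interplay between averaged identities and pointwise hypotheses in $(ii)$ and $(v)$: the algebra of $(i)$, $(iii)$, $(iv)$ is already forced by the earlier lemmas, whereas $(ii)$ requires the distance argument above and $(v)$ requires the equality-in-an-inequality bookkeeping that upgrades an averaged statement into a pointwise one.
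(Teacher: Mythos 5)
Your proposal is correct and follows essentially the same route as the paper: each of $(i)$, $(iii)$, $(iv)$ is reduced to $\overline{k}_m \le p_m(\lambda_0)$ via Lemma \ref{lema:basic00} and Lemma \ref{ortho-pol}$(iii)$ and then closed by Proposition \ref{theo:excesspdr}; $(ii)$ uses the same shortest-path inclusion $S_{m-2}(u')\cap S_1(v)\subset S_{m-1}(u)\cap S_1(v)$ to get $c_m(u,v)\ge c_{m-1}$ and reduce to $(i)$; and $(v)$ combines Lemma \ref{lema:basic00}$(iv)$ with the pointwise bound to force $c_m(u,v)=\gamma_m$. Your explicit equality bookkeeping in $(v)$ merely spells out what the paper leaves as ``this easily shows,'' and your observation that the conditions nest as $(i)\Rightarrow(iv)\Rightarrow(iii)$ matches the remark the paper makes after its proof.
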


\begin{proof}
$(i)$ By Lemma \ref{lema:basic00}$(i)$, Lemma \ref{ortho-pol}$(iii)$, and the hypothesis,
\begin{equation*}
\label{ineq-1}
\overline{k}_m= \frac{1}{\overline{c}_m}k_{m-1}\beta_{m-1}=\frac{1}{\overline{c}_m}p_{m-1}(\lambda_0)\beta_{m-1}=\frac{1}{\overline{c}_m} p_m(\lambda_0)\gamma_m\le p_m(\lambda_0).
\end{equation*}
Now the conclusion follows from Proposition \ref{theo:excesspdr}.

$(ii)$ If $u$ and $v$ are vertices at distance $m$, and $u'$ is adjacent to $u$ and on a shortest path between $u$ and $v$, then $S_{m-1}(u) \cap S(v) \supset S_{m-2}(u') \cap S(v)$, which shows that $c_m(u,v)\ge c_{m-1}(u',v)= c_{m-1}\geq \gamma_m$. Now the result follows from $(i)$.

\item[$(iii)$]
By Lemma \ref{lema:basic00}$(ii)$ and the hypothesis, we have
$$
p_m(\lambda_0)=\frac{k_{m-1}(\overline{a_{m-1}^2}-\alpha_{m-1}^2)+\overline{k}_m\overline{c_m^2}}{\gamma_m^2}\ge
\frac{\overline{k}_m \gamma_m^2}{\gamma_m^2}=\overline{k}_m,
$$
and the result follows from Proposition \ref{theo:excesspdr}.

$(iv)$ From Lemma \ref{lema:basic00}$(iii)$ and the hypothesis, we have that $p_m(\lambda_0)\ge \overline{k}_m$,
and the result follows again from Proposition \ref{theo:excesspdr}.

$(v)$ Because $a_{m-1}$ is well-defined, also $b_{m-1}$ is well-defined, and $\overline{c_m^2}= \gamma_m\overline{c}_m$ by Lemma \ref{lema:basic00}$(iv)$. But if $c_m(u,v) \le \gamma_m$ for every pair of vertices $u,v$ at distance $m$, then this easily shows that the intersection
number $c_m$ is also well-defined, which proves the result.
\end{proof}

Since $\overline{c_m^2}\ge (\overline{c}_m)^2$, the result with condition $(i)$ is a consequence of the result involving condition $(iv)$.
Also, observe that, because of Lemma \ref{lema:basic00}$(i)$, the proof of Proposition \ref{theo:basic}$(v)$ also works if we change
the hypothesis `$a_{m-1}$ is well-defined' to either
`$a_{m-1}(u,v)\le \alpha_{m-1}$ for every  $u,v$ at distance $m-1$' or
`$a_{m-1}(u,v)\ge \alpha_{m-1}$ for every  $u,v$ at distance $m-1$'.
The result also holds if we require that
`$c_{m}(u,v)\ge \gamma_{m}$ for every  $u,v$ at distance $m$', in which case we do not need the above hypotheses on $a_{m-1}$
 since then $\overline{c}_m\ge \gamma_m$ and the result follows from Proposition \ref{theo:basic}$(i)$.

As a consequence of Proposition \ref{theo:basic}$(i)$, and since every regular graph is clearly $1$-partially distance-regular with $c_1=\gamma_1=1$ by Lemma \ref{lemma_regulargraph}, we have the following result.

\begin{propo}
\label{c=gamma}
\begin{itemize}
\item[$(i)$]
Every regular graph $G$ with $D \ge d-1$ and preintersection numbers
satisfying $\overline{c}_i\ge \gamma_i$ for $i=2,\ldots,d-1$, is distance-regular,
\item[$(ii)$]
Every regular bipartite graph $G$ with $D \ge d-2$ and preintersection
numbers satisfying $\overline{c}_i\ge \gamma_i$ for $i=2,\ldots,d-2$, is distance-regular.
\end{itemize}
\end{propo}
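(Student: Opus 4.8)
The plan is to derive distance-regularity by bootstrapping the level of partial distance-regularity one step at a time, using Proposition~\ref{theo:basic}$(i)$ as the inductive engine and closing the argument with Proposition~\ref{propo-pdrg=drg}. The base case is immediate: by Lemma~\ref{lemma_regulargraph}, regularity of $G$ is equivalent to $p_1=x$, that is, $p_1(\A)=\A=\A_1$, so every regular graph is $1$-partially distance-regular, with $c_1=\gamma_1=1$ well-defined. This provides the starting point $m=1$ for the induction.

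For part $(i)$, I would run the inductive step as follows. Assume $G$ is $(m-1)$-partially distance-regular for some $2\le m\le d-1$. Since $D\ge d-1\ge m$, the integer $m$ satisfies $m\le D$, so Proposition~\ref{theo:basic}$(i)$ is applicable; combined with the hypothesis $\overline{c}_m\ge\gamma_m$ (which is assumed precisely for $m=2,\dots,d-1$), it yields that $G$ is $m$-partially distance-regular. Iterating this from $m=2$ up to $m=d-1$ shows that $G$ is $(d-1)$-partially distance-regular, and then Proposition~\ref{propo-pdrg=drg}$(i)$, using that $G$ has $d+1$ distinct eigenvalues, gives that $G$ is distance-regular.

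For part $(ii)$, the reasoning is the same, except that one only bootstraps up to level $d-2$. Using $\overline{c}_m\ge\gamma_m$ for $m=2,\dots,d-2$ together with $D\ge d-2$ (so that $m\le D$ at each step), Proposition~\ref{theo:basic}$(i)$ shows that $G$ is $(d-2)$-partially distance-regular; since $G$ is bipartite, I then invoke Proposition~\ref{propo-pdrg=drg}$(ii)$ to conclude distance-regularity.

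The only point that needs care is the bookkeeping of indices: one must verify $m\le D$ at each step so that Proposition~\ref{theo:basic}$(i)$ applies, which is exactly what the hypotheses $D\ge d-1$ and $D\ge d-2$ guarantee, and one should check the degenerate cases (for instance $d=2$ in $(i)$, where the range $i=2,\dots,d-1$ is empty and $G$ is already $(d-1)$-partially distance-regular straight from the base case). Beyond this routine verification I do not expect any genuine obstacle, since the whole proposition is essentially a repeated application of Proposition~\ref{theo:basic}$(i)$ followed by one use of Proposition~\ref{propo-pdrg=drg}.
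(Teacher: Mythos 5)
Your proposal is correct and follows exactly the paper's argument: the paper's proof is the one-line instruction to apply Proposition~\ref{theo:basic}$(i)$ recursively (starting from the fact that every regular graph is $1$-partially distance-regular with $c_1=\gamma_1=1$) and then finish with Proposition~\ref{propo-pdrg=drg}. Your additional bookkeeping about $m\le D$ and the degenerate cases is sound but not a departure from the paper's route.
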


\begin{proof}
$(i)$ Apply Proposition \ref{theo:basic}$(i)$ recursively to show that $G$ is $(d-1)$-partially distance-regular and then use Proposition \ref{propo-pdrg=drg}$(i)$. The proof of $(ii)$ is similar.
\end{proof}

From Proposition \ref{c=gamma}$(i)$, it clearly follows that if $G$
has the parameters $c_i$ well-defined and equal to $\gamma_i$
for $i=1,\ldots,d-1$, then $G$ is distance-regular. Note that it
is not enough to assume only that the $c_i$'s are well-defined.
To illustrate this, we give an example of a
non-distance-regular graph with well-defined $k_i$ and $c_i$.
Consider the strong product $G$ of the cube $Q_3$ with the complete graph $K_2$, shown in Figure \ref{fig3}. This graph
is $7$-regular with spectrum $\spec G=\{7^{1},3^{3},-1^{11},-5^{1}\}$,
it has diameter $D=d=3$, and well-defined intersection numbers $c_1=1$, $c_2=4$, and
$c_3=6$. However, it is not a distance-regular graph. We note that
$G$ has preintersection numbers $\gamma_1=1$, $\gamma_2\approx4.571$
and $\gamma_3\approx4.816$. Even more so, it has well-defined $k_1=7$,
$k_2=6$, and $k_3=2$ (which is easily seen because $G$ is vertex-transitive). In fact, only $a_1$ and $b_1$ are not
well-defined.

\begin{figure}[t]
\begin{center}
\includegraphics[scale=0.7]{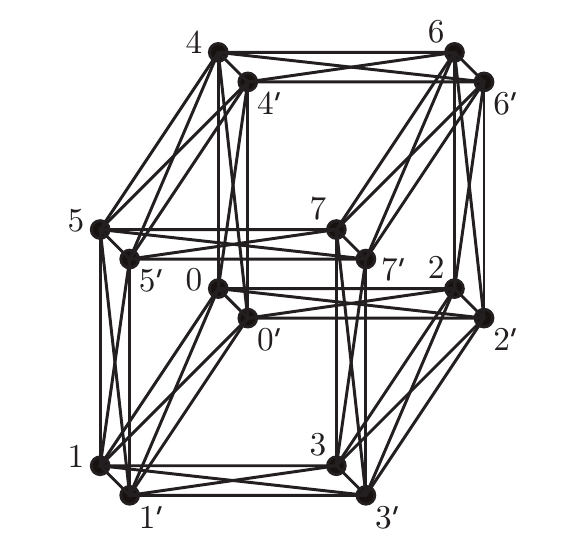}
\caption{The strong product of $Q_3$ by $K_2$.}
\label{fig3}
\end{center}
\end{figure}

Similarly, if one takes the Kronecker product of the adjacency matrix of a bipartite
distance-regular graph with even diameter $D$ with the all-one
$2\times 2$ matrix $\J$, then the result is the adjacency matrix of a regular graph with diameter $D=d$
and with well-defined $k_i$ and $a_i$, but it is not
distance-regular, since $c_2$ and $b_2$ are not well-defined.

These examples show that the combinatorial properties are not sufficient and some extra spectral information is required. This is in line with earlier results in the literature, where cospectrality with a
distance-regular graph, or `feasible spectrum' for a distance-regular graph, is required (see, for example, Haemers \cite{h96} or Van Dam and Haemers \cite{vdh02}).

Another consequence of Proposition \ref{theo:basic} is the following result. It corresponds to the result of Van Dam and Haemers \cite{vdh02} stated in Theorem \ref{theo-book}$(v)$, and its bipartite counterpart. Recall that the preintersection numbers are determined by the spectrum, and that regularity of a graph is characterized by the condition that $\gamma_1=1$.

\begin{theo}
\label{gamma=1}
Let $G$ be a graph with $d+1$ distinct eigenvalues.
\begin{itemize}
\item[$(i)$]
If $d\geq 2$ and $G$ has preintersection numbers $\gamma_1=\cdots=\gamma_{d-1}=1$, then it is distance-regular,
\item[$(ii)$]
If $d\geq 3$ and $G$ is bipartite and has preintersection numbers $\gamma_1=\cdots=\gamma_{d-2}=1$, then it
is distance-regular.
\end{itemize}
\end{theo}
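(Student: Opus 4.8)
The plan is to establish distance-regularity by building up partial distance-regularity one step at a time and then invoking Proposition \ref{propo-pdrg=drg}. Since both parts assume $\gamma_1 = 1$, Lemma \ref{lemma_regulargraph} makes $G$ regular, hence $1$-partially distance-regular with $c_1 = \gamma_1 = 1$. For part $(i)$ I would then show by induction on $m$ that $G$ is $m$-partially distance-regular for all $m = 1, \ldots, d-1$. The inductive step uses Proposition \ref{theo:basic}$(i)$: given that $G$ is $(m-1)$-partially distance-regular, it is enough to verify $\overline{c}_m \geq \gamma_m$. This inequality is essentially free, because for any vertices $u,v$ at distance $m \geq 1$, a neighbor of $v$ on a shortest path to $u$ lies in $S_{m-1}(u)\cap S_1(v)$, so $c_m(u,v) \geq 1$ and therefore $\overline{c}_m \geq 1 = \gamma_m$ (recall $\gamma_m = 1$ for $m \leq d-1$). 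Once $G$ is $(d-1)$-partially distance-regular, Proposition \ref{propo-pdrg=drg}$(i)$ finishes the job. Part $(ii)$ is identical, except that the hypothesis only furnishes $\gamma_m = 1$ for $m \leq d-2$, so I push the induction only to $m = d-2$ and close with the bipartite Proposition \ref{propo-pdrg=drg}$(ii)$.

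The delicate point, and the main obstacle, is that Proposition \ref{theo:basic} requires $m \leq D$, whereas knowing $G$ is $(m-1)$-partially distance-regular only guarantees $D \geq m-1$; I must rule out $D = m-1$ before applying it at level $m$. I would settle this with a short orthogonality argument in the spirit of Proposition \ref{theo:excesspdr}: if $D = m-1$, then $\J = \sum_{i=0}^{m-1}\A_i = \sum_{i=0}^{m-1}p_i(\A)$, while Lemma \ref{ortho-pol}$(iv)$ and $H(\A) = \J$ give $\J = \sum_{i=0}^{d}p_i(\A)$. Subtracting yields $\sum_{i=m}^{d}p_i(\A) = 0$, and taking squared norms, together with the mutual orthogonality $\langle p_i(\A), p_j(\A)\rangle = 0$ for $i \neq j$, forces $0 = \sum_{i=m}^{d}p_i(\lambda_0)$, which is impossible since each $p_i(\lambda_0) = \|p_i\|_G^2 > 0$. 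Hence $D \geq m$ whenever $G$ is $(m-1)$-partially distance-regular and $m \leq d$, exactly as needed at every stage of the induction.

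Finally I would verify the smallest cases, where the induction is vacuous: for $d = 2$ in $(i)$ and $d = 3$ in $(ii)$ the graph is already $(d-1)$- respectively $(d-2)$-partially distance-regular just by regularity, and Proposition \ref{propo-pdrg=drg} applies directly. One could instead package the induction by citing Proposition \ref{c=gamma}, since $\overline{c}_i \geq 1 = \gamma_i$ holds automatically for $i = 2, \ldots, d-1$; however, Proposition \ref{c=gamma} takes $D \geq d-1$ as a hypothesis, and securing that bound needs precisely the inductive build-up above, so I would keep the self-contained version.
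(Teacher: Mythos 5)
Your proof is correct and follows essentially the same route as the paper: recursive application of Proposition \ref{theo:basic}$(i)$ using $\overline{c}_m \geq 1 = \gamma_m$, followed by Proposition \ref{propo-pdrg=drg}. The only difference is how the requirement $m \leq D$ is handled: the paper splits into the cases $D \leq d-1$ (where the recursion stops at $m = D$ and $D$-partial distance-regularity already yields distance-regularity) and $D = d$, whereas you rule out $D = m-1$ at each step with a correct orthogonality argument.
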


\begin{proof}
$(i)$ If $D \le d-1$, then apply Proposition \ref{theo:basic}$(i)$ or $(ii)$ recursively (using that $\overline{c}_m \geq 1$ and $c_{m-1} \geq 1$) to derive that $G$ is $D$-partially distance-regular, that is, that $G$ is distance-regular. If $D=d$, then it follows similarly that $G$ is $(d-1)$-partially distance-regular, and then it follows from Proposition \ref{propo-pdrg=drg}$(i)$ that $G$ is distance-regular. The proof of $(ii)$ is similar.
\end{proof}

Moreover, Proposition \ref{theo:basic}$(ii)$ also yields the following slight improvement of Proposition \ref{propo-pdrg=drg}. Recall that $1$-partial distance-regularity implies regularity.

\begin{propo}
\label{theoc:c(m)=c(m-1)'}
Let $G$ be a graph  with $d+1$ distinct eigenvalues.
\begin{itemize}
\item[$(i)$]
If $d\geq 3$, $G$ is $(d-2)$-partially distance-regular, and $\gamma_{d-1}\le c_{d-2}$, then $G$ is distance-regular,
\item[$(ii)$]
If $d\geq 4$, $G$ is bipartite and $(d-3)$-partially distance-regular, and $\gamma_{d-2}\le c_{d-3}$, then $G$ is distance-regular.
\hfill $\Box$
\end{itemize}
\end{propo}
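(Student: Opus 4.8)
The plan is to \emph{bootstrap} one additional level of partial distance-regularity by a single application of Proposition~\ref{theo:basic}$(ii)$, and then to invoke Proposition~\ref{propo-pdrg=drg}; this is exactly why the result is a slight strengthening of that proposition, trading one level of assumed partial distance-regularity for the inequality on the (pre)intersection numbers. Note first that in both parts the hypothesis $d\ge 3$ (resp. $d\ge 4$) guarantees that the relevant level of partial distance-regularity is at least $1$, so that $G$ is $1$-partially distance-regular and hence regular; this is what is needed to apply the results of the previous section, all of which assume regularity. It also makes the hypotheses meaningful: since $G$ is $(d-2)$-partially distance-regular in $(i)$, the intersection number $c_{d-2}$ is well-defined (and equal to $\gamma_{d-2}$), so that the inequality $\gamma_{d-1}\le c_{d-2}$ makes sense; likewise $c_{d-3}$ is well-defined in $(ii)$.

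For $(i)$ I would set $m=d-1$. Assuming for the moment that $m\le D$, the graph $G$ is $(m-1)$-partially distance-regular by hypothesis, and the assumption $\gamma_{d-1}\le c_{d-2}$ reads precisely $c_{m-1}\ge\gamma_m$, which is condition $(ii)$ of Proposition~\ref{theo:basic}. That proposition then yields that $G$ is $(d-1)$-partially distance-regular, and Proposition~\ref{propo-pdrg=drg}$(i)$ upgrades this to distance-regularity. Part $(ii)$ is entirely analogous with $m=d-2$: the hypothesis $\gamma_{d-2}\le c_{d-3}$ is again condition $(ii)$ of Proposition~\ref{theo:basic}, so $G$ becomes $(d-2)$-partially distance-regular, and since $G$ is bipartite, Proposition~\ref{propo-pdrg=drg}$(ii)$ gives distance-regularity.

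The one point requiring care -- and the only real obstacle -- is the boundary constraint $m\le D$ in Proposition~\ref{theo:basic}, since a priori we only know $D\le d$. Because $G$ is $(d-2)$-partially distance-regular in $(i)$, we do have $D\ge d-2$, so the only value of $D$ for which $m=d-1\le D$ can fail is $D=d-2$. In that case, however, $G$ is $D$-partially distance-regular, which already forces $G$ to be distance-regular: then $c_i,a_i,b_i$ are well-defined for all $i\le D$, using that $b_D=0$ and $a_D=k-c_D$ are automatically well-defined for a regular graph with $c_D$ well-defined. So there is nothing left to prove. Exactly the same dichotomy handles $(ii)$: either $D\ge d-2$ and the argument above applies with $m=d-2$, or $D=d-3$ and $G$ is already distance-regular. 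I would therefore present each part as a short case split on $D$, after which the single invocation of Proposition~\ref{theo:basic}$(ii)$ followed by Proposition~\ref{propo-pdrg=drg} completes the argument.
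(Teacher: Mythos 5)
Your proof is correct and follows exactly the route the paper intends (the paper omits the proof, noting only that the result ``follows from Proposition~\ref{theo:basic}$(ii)$'' combined with Proposition~\ref{propo-pdrg=drg}): one application of Proposition~\ref{theo:basic}$(ii)$ with $m=d-1$ (resp.\ $m=d-2$) to gain one level of partial distance-regularity, then Proposition~\ref{propo-pdrg=drg}. Your extra remarks on regularity and on the boundary constraint $m\le D$ are sound and only make the omitted argument more explicit.
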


\subsection{Distance-regularity from large girth}
\label{dr-lgirth-rev}
Our aim here is to give some results concerning graphs with large girth.
First, we need the following characterization of the girth of a regular graph in terms of the preintersection numbers (cf.~Lemma \ref{oddgirthlemma} for a similar characterization for the odd-girth).

\begin{corollari}\label{cor:girth} \begin{itemize}
\item[$(i)$]
A regular graph has girth $2m+1$ if and only if $\alpha_0=\cdots=\alpha_{m-1}=0$, $\alpha_m \neq 0$, and $\gamma_1=\cdots=\gamma_m=1$,
\item[$(ii)$]
A regular graph has girth $2m$ if and only if $\alpha_0=\cdots=\alpha_{m-1}=0$,  $\gamma_1=\cdots=\gamma_{m-1}=1$, and $\gamma_m > 1$.
\end{itemize}
\end{corollari}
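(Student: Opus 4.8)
The plan is to treat each equivalence as two implications, using Lemma~\ref{lema-girth} to convert girth into partial distance-regularity and Lemma~\ref{oddgirthlemma} to read off the vanishing of the $\alpha_i$ from the odd-girth. For the forward implication of $(i)$, girth $2m+1$ gives $\lfloor(g-1)/2\rfloor=m$, so $G$ is $m$-partially distance-regular with $c_i=1$ $(1\le i\le m)$ and $a_i=0$ $(0\le i\le m-1)$; since for $m$-partially distance-regular graphs these well-defined intersection numbers equal the corresponding preintersection numbers, we get $\gamma_1=\cdots=\gamma_m=1$ and $\alpha_0=\cdots=\alpha_{m-1}=0$, while $\alpha_m\neq0$ comes from Lemma~\ref{oddgirthlemma} (an odd cycle makes $G$ non-bipartite of odd-girth $2m+1$). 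The forward implication of $(ii)$ is analogous: girth $2m$ gives only $(m-1)$-partial distance-regularity, hence $\gamma_1=\cdots=\gamma_{m-1}=1$ and $\alpha_0=\cdots=\alpha_{m-2}=0$; the additional $\alpha_{m-1}=0$ follows because an even girth forces the odd-girth to be at least $2m+1$, so Lemma~\ref{oddgirthlemma} yields $\alpha_0=\cdots=\alpha_{m-1}=0$ regardless of bipartiteness.

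The engine for both backward implications is the claim that a regular graph of girth exactly $2\ell$ has $\gamma_\ell>1$. First I would observe that such a graph is $(\ell-1)$-partially distance-regular with $\ell\le D$, since two antipodal vertices of a shortest $2\ell$-cycle lie at distance exactly $\ell$. Then, exactly as in the proof of Lemma~\ref{lema:basic00}$(iv)$, combining Lemma~\ref{ortho-pol}$(iii)$ with Lemma~\ref{lema:basic00}$(i)$ gives $p_\ell(\lambda_0)\gamma_\ell=\overline{k}_\ell\,\overline{c}_\ell$, while Proposition~\ref{theo:excesspdr} gives $\overline{k}_\ell\ge p_\ell(\lambda_0)>0$; together these yield $\gamma_\ell\ge\overline{c}_\ell$. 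Finally $\overline{c}_\ell>1$: every pair at distance $\ell$ has $c_\ell\ge1$, and for the antipodal pair $u,v$ on a shortest $2\ell$-cycle the two cycle-neighbours of $v$ both lie at distance $\ell-1$ from $u$, so $c_\ell(u,v)\ge2$. Hence $\gamma_\ell\ge\overline{c}_\ell>1$.

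From this I would extract an auxiliary statement: for a regular graph, $\gamma_1=\cdots=\gamma_\mu=1$ together with $\alpha_0=\cdots=\alpha_{\mu-1}=0$ forces girth $\ge2\mu+1$. Indeed, a shorter odd girth $2\ell+1\le2\mu-1$ would give $\alpha_\ell\neq0$ with $\ell\le\mu-1$ by Lemma~\ref{oddgirthlemma}, and a shorter even girth $2\ell\le2\mu$ would give $\gamma_\ell>1$ with $\ell\le\mu$ by the claim above, each a contradiction. The backward implications then become bookkeeping. For $(i)$, the hypotheses give girth $\ge2m+1$ by the auxiliary statement, while $\alpha_m\neq0$ makes $G$ non-bipartite of odd-girth $2m+1$ (Lemma~\ref{oddgirthlemma}), so girth $\le2m+1$; hence girth $=2m+1$. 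For $(ii)$, the hypotheses give girth $\ge2m-1$ (auxiliary statement with $\mu=m-1$); girth $=2m-1$ is impossible since it would force $\alpha_{m-1}\neq0$ (Lemma~\ref{oddgirthlemma}), and girth $\ge2m+1$ is impossible since by Lemma~\ref{lema-girth} it would force $\gamma_m=1$; hence girth $=2m$.

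I expect the main obstacle to be purely combinatorial rather than algebraic: establishing that the two antipodal vertices of a shortest even cycle really are at distance $\ell$ (so that $c_\ell\ge2$ and $\overline{c}_\ell>1$), which requires ruling out that a shortcut between them creates a cycle shorter than the girth. The algebraic half---the identity $p_\ell(\lambda_0)\gamma_\ell=\overline{k}_\ell\overline{c}_\ell$ and the excess inequality $\overline{k}_\ell\ge p_\ell(\lambda_0)$---is already packaged in the earlier lemmas, and the remaining girth-exclusion arguments are elementary once the key claim $\gamma_\ell>1$ for even girth $2\ell$ is in place.
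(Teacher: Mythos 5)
Your proof is correct, and while the forward implications coincide with the paper's (Lemma~\ref{lema-girth} plus Lemma~\ref{oddgirthlemma}), your backward implications take a genuinely different route. The paper runs the implication ``forwards'': it combines Lemma~\ref{oddgirthlemma} with Proposition~\ref{theo:basic}$(ii)$ recursively (using $c_{i-1}=1\ge\gamma_i$) to conclude that $G$ is $m$-partially distance-regular with well-defined $a_0=\cdots=a_{m-1}=0$ and $c_1=\cdots=c_m=1$, reads off girth $\ge 2m+1$ from these combinatorial parameters, and caps the girth by the odd-girth; part $(ii)$ is then dismissed as ``similar.'' You instead argue by contradiction: any shorter girth is either odd, forcing some $\alpha_\ell\neq 0$ with $\ell\le m-1$ by Lemma~\ref{oddgirthlemma}, or even, say $2\ell$, in which case your key claim $\gamma_\ell\ge\overline{c}_\ell>1$ contradicts $\gamma_\ell=1$. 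That claim is sound: the identity $p_\ell(\lambda_0)\gamma_\ell=\overline{k}_\ell\,\overline{c}_\ell$ (Lemmas~\ref{ortho-pol}$(iii)$ and~\ref{lema:basic00}$(i)$) together with $\overline{k}_\ell\ge p_\ell(\lambda_0)$ (Proposition~\ref{theo:excesspdr}) gives $\gamma_\ell\ge\overline{c}_\ell$, and your combinatorial observation that the antipodal pair of a shortest $2\ell$-cycle is at distance exactly $\ell$ with $c_\ell(u,v)\ge 2$ (any shortcut would close a cycle of length $<2\ell$) gives $\overline{c}_\ell>1$. In effect you are running the proof of Proposition~\ref{theo:basic}$(i)$ in its contrapositive form rather than invoking the proposition itself, so both arguments ultimately rest on the same excess inequality. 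What your version buys is an explicit derivation of the $\gamma_m>1$ half of part $(ii)$, which the paper leaves implicit, and a self-contained quantitative reason why even girth $2\ell$ pushes $\gamma_\ell$ strictly above $1$; what it costs is the extra combinatorial lemma about antipodal vertices on a shortest even cycle, which the paper's bootstrap avoids.
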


\begin{proof} $(i)$ If the graph is regular with girth $2m+1$, then it is $m$-partially distance-regular with $\alpha_0=\cdots=\alpha_{m-1}=0$ and $\gamma_1=\cdots=\gamma_m=1$ by Lemma \ref{lema-girth}, and $\alpha_m \neq 0$ by Lemma \ref{oddgirthlemma}.

   Conversely, if $\alpha_0=\cdots=\alpha_{m-1}=0$, $\alpha_m \neq 0$, and $\gamma_1=\cdots=\gamma_m=1$, then by combining Lemma \ref{oddgirthlemma} and Proposition \ref{theo:basic}$(ii)$ recursively, it follows that the graph is $m$-partially distance-regular with $a_0=\cdots=a_{m-1}=0$ and $c_1=\cdots=c_m=1$, so the girth is at least $2m+1$. Moreover, by Lemma \ref{oddgirthlemma}, the odd-girth is $2m+1$, which shows that the girth is indeed $2m+1$.

$(ii)$ This follows from similar arguments and using $(i)$.
\end{proof}

From  Proposition \ref{theo:basic}$(v)$, we now obtain a refinement of
the result in Theorem \ref{theo-book}$(iii)$.

\begin{theo}
\label{theo-girth+} Let $G$ be a regular graph with $d+1$
distinct eigenvalues $\lambda_0>\cdots >\lambda_d$ and
girth $g\ge 2d-2$. Then
\begin{equation}
\label{value-of-cd}
\gamma_d\ge -(\lambda_1+\cdots+\lambda_d),
\end{equation}
with equality if and only if $G$ is distance-regular and either
bipartite or a generalized Odd graph.
\end{theo}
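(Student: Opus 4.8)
The plan is to reduce \eqref{value-of-cd} to the single inequality $\alpha_{d-1}\ge 0$ and then read the equality case off the preintersection numbers. First I would record what the girth hypothesis gives. Since $g\ge 2d-2$, Lemma \ref{lema-girth} shows that $G$ is $(d-2)$-partially distance-regular, so $p_i(\A)=\A_i$ for $i\le d-2$ and $c_i=1$ $(i\le d-2)$, $a_i=0$ $(i\le d-3)$, $b_i$ $(i\le d-3)$ are well-defined. Moreover $D\ge d-1$: if $D\le d-2$ then, using Lemma \ref{ortho-pol}$(iv)$ and $H(\A)=\J$, we would get $p_{d-1}(\A)+p_d(\A)=\J-\sum_{i=0}^{d-2}\A_i=\J-\J$, the zero matrix, which is impossible since $p_{d-1}+p_d$ does not vanish at $\lambda_0$. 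Next, in all cases $\alpha_0=\cdots=\alpha_{d-2}=0$: if $G$ is bipartite this is Lemma \ref{oddgirthlemma}, and otherwise the odd-girth is at least $g\ge 2d-2$, hence at least $2d-1$, so Lemma \ref{oddgirthlemma} again applies. In particular $\alpha_{d-2}=\overline{a}_{d-2}=0$ by Lemma \ref{lema:basic00}$(i)$, and since the numbers $a_{d-2}(u,v)$ are nonnegative this forces $a_{d-2}$ to be well-defined and equal to $0$. Finally, exactly as in the derivation of \eqref{gamma-alpha} (using Lemma \ref{properties-preintersectionnumbers}$(ii)$ and Lemma \ref{ortho-pol}$(ii)$ with $\beta_d=0$), we obtain $\gamma_d=\alpha_{d-1}-(\lambda_1+\cdots+\lambda_d)$. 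Thus \eqref{value-of-cd} is equivalent to $\alpha_{d-1}\ge 0$, and equality holds exactly when $\alpha_{d-1}=0$.

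To prove $\alpha_{d-1}\ge 0$ I would analyse the matrix $C:=\gamma_{d-1}\,p_{d-1}(\A)$. From the three-term recurrence \eqref{3-term-recur}, $\gamma_{d-1}p_{d-1}=xp_{d-2}-\alpha_{d-2}p_{d-2}-\beta_{d-3}p_{d-3}$, so $C=\A\A_{d-2}-\alpha_{d-2}\A_{d-2}-\beta_{d-3}\A_{d-3}$. The matrix $\A\A_{d-2}$ is supported on pairs at distance $d-3,d-2,d-1$; on a pair $(u,v)$ at distance $d-3$ its entry is the well-defined number $b_{d-3}=\beta_{d-3}$, on a pair at distance $d-2$ it is the well-defined number $a_{d-2}=\alpha_{d-2}$, and both are cancelled by the subtracted terms. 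Hence $C$ is supported exactly on the pairs at distance $d-1$, where $C_{uv}=(\A\A_{d-2})_{uv}=|S_1(u)\cap S_{d-2}(v)|=c_{d-1}(v,u)\ge 1$. Since $\alpha_{d-1}=\langle xp_{d-1},p_{d-1}\rangle_G/\|p_{d-1}\|_G^2=\langle \A C,C\rangle/\|C\|^2$, and both $\A$ and $C$ have nonnegative entries, we get $\langle \A C,C\rangle=\frac1n\sum_{u,v}(\A C)_{uv}C_{uv}\ge 0$ and therefore $\alpha_{d-1}\ge 0$, which is \eqref{value-of-cd}.

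For the equality case, by the reduction above equality in \eqref{value-of-cd} is equivalent to $\alpha_{d-1}=0$, hence to $\alpha_0=\cdots=\alpha_{d-1}=0$. If $G$ is bipartite, then, being also $(d-2)$-partially distance-regular, it is distance-regular by Proposition \ref{propo-pdrg=drg}$(ii)$. If $G$ is not bipartite, then $\alpha_d\neq 0$, so by Lemma \ref{oddgirthlemma} the odd-girth equals $2d+1$, the odd-girth theorem \cite{vdh11} yields that $G$ is distance-regular, and since its odd-girth is $2D+1$ it is a generalized Odd graph. Conversely, if $G$ is distance-regular and either bipartite or a generalized Odd graph, then $a_1=\cdots=a_{d-1}=0$, so $\alpha_{d-1}=a_{d-1}=0$ and \eqref{value-of-cd} holds with equality.

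The step I expect to be the main obstacle is the inequality $\alpha_{d-1}\ge 0$: the crux is to recognise that, under $(d-2)$-partial distance-regularity with $a_{d-2}$ well-defined, the polynomial $p_{d-1}$ evaluated at $\A$ is, up to the positive factor $\gamma_{d-1}$, an entrywise nonnegative matrix supported on the distance-$(d-1)$ graph, so that $\alpha_{d-1}$ becomes a ratio of manifestly nonnegative quantities. Once this is in place, the reduction through \eqref{gamma-alpha} and the identification of the equality case as a pure odd-girth (or bipartiteness) condition, handled by Lemma \ref{oddgirthlemma} together with Proposition \ref{propo-pdrg=drg}$(ii)$ and the odd-girth theorem, are comparatively routine.
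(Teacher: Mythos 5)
Your proof is correct, but it reaches the inequality by a genuinely different route from the paper's. The paper never isolates $\alpha_{d-1}$: it extracts the two leading coefficients of the Hoffman polynomial $H=\sum_i p_i$ to obtain the pointwise identity $(\lambda_1+\cdots+\lambda_d)c_{d-1}(u,v)+\gamma_{d-1}\gamma_d=(\A^d)_{uv}\ge 0$ for every pair at distance $d-1$, then assumes $\gamma_d\le -(\lambda_1+\cdots+\lambda_d)$, deduces $c_{d-1}(u,v)\le\gamma_{d-1}$ for all such pairs, invokes Proposition \ref{theo:basic}$(v)$ and Proposition \ref{propo-pdrg=drg}$(i)$ to get distance-regularity, and only then reads off equality and its characterization from $(\A^d)_{uv}=0$; the inequality and the equality case are thus proved together. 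You decouple them: the reduction through \eqref{gamma-alpha} turns \eqref{value-of-cd} into $\alpha_{d-1}\ge 0$, which you prove directly by exhibiting $\gamma_{d-1}p_{d-1}(\A)=\A\A_{d-2}-\alpha_{d-2}\A_{d-2}-\beta_{d-3}\A_{d-3}$ as an entrywise nonnegative matrix supported on the distance-$(d-1)$ pairs --- the observation that $\overline{a}_{d-2}=\alpha_{d-2}=0$ forces $a_{d-2}$ to be well-defined and zero is the small but essential point that makes this work --- and you then settle equality via the odd-girth theorem (non-bipartite case) and Proposition \ref{propo-pdrg=drg}$(ii)$ (bipartite case). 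Both arguments ultimately exploit nonnegativity of entries of a polynomial in $\A$, but yours gives a direct, self-contained proof of the inequality that bypasses Proposition \ref{theo:basic} altogether, at the price of outsourcing the equality case to the odd-girth theorem, whereas the paper keeps everything inside its own partial-distance-regularity machinery. Your argument is sound, including the preliminary checks ($D\ge d-1$, $\alpha_0=\cdots=\alpha_{d-2}=0$ from odd-girth $\ge 2d-1$ in the non-bipartite case, and the identification $\alpha_{d-1}=\langle \A C,C\rangle/\|C\|^2$ with $C=\gamma_{d-1}p_{d-1}(\A)$).
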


\begin{proof}
Note that, from the hypothesis on the girth, Lemma
\ref{lema-girth}, and Corollary \ref{cor:girth}, it follows that $G$ is $(d-2)$-partially distance-regular
with $c_i=\gamma_i=1$ and $a_i=\alpha_i=0$ for $i=1,\ldots,d-2$. It also follows that if $u$ and $v$ are vertices at distance $d-1$, then $c_{d-1}(u,v)=(\A^{d-1})_{uv}$.
Moreover, since
$$
\displaystyle
\sum_{i=0}^d p_i(x)=H(x)= \frac{n}{\pi_0}\prod_{i=1}^d (x-\lambda_i)=\frac{n}{\pi_0}[x^d-(\lambda_1+\cdots+\lambda_d)x^{d-1}+\cdots],
$$
where $\pi_0=\prod_{i=1}^d (\lambda_0-\lambda_i)$, the leading
coefficient of $p_d$ is $n/\pi_0$, and hence $n/\pi_0=(\gamma_d
\gamma_{d-1})^{-1}$ by Lemma \ref{two-terms}.

If we now consider two vertices $u,v$ at distance $d-1$, then
from the equation $H(\A)=\J$ we obtain that
$$
\displaystyle
1=\frac{n}{\pi_0}[(\A^d)_{uv}-(\lambda_1+\cdots+\lambda_d)(\A^{d-1})_{uv}],
$$
and hence that
\begin{equation}
\label{bound-A^d}
(\lambda_1+\cdots+\lambda_d)c_{d-1}(u,v)+\gamma_{d-1}\gamma_d=(\A^d)_{uv}\ge 0.
\end{equation}

Now let us assume that $\gamma_d\le -(\lambda_1+\cdots+\lambda_d)$, and aim to prove equality, so that \eqref{value-of-cd} follows and we can immediately characterize the case of equality.
Then, using the fact that $\gamma_d>0$ (by Lemma
\ref{properties-preintersectionnumbers}$(i)$), we have that
\begin{equation}
\label{ineq:odd-bip}
c_{d-1}(u,v)\le \frac{\gamma_{d-1}\gamma_d}{-(\lambda_1+\cdots+\lambda_d)}\le \gamma_{d-1}.
\end{equation}

Consequently, from Proposition \ref{theo:basic}$(v)$, $G$ is
$(d-1)$-partially distance-regular, and by using Proposition
\ref{propo-pdrg=drg}$(i)$, we conclude that $G$ is
distance-regular, and
$c_{d-1}=\gamma_{d-1}$. Now equalities in
\eqref{ineq:odd-bip} hold for all vertices $u,v$ at distance
$d-1$, and hence we have equality in \eqref{value-of-cd}: $\gamma_d =
-(\lambda_1+\cdots+\lambda_d)$. Moreover, this holds if and only if $(\A^d)_{uv}=0$ in \eqref{bound-A^d}, which means that there are no odd cycles of length smaller than $2d+1$, so $a_0=\cdots = a_{d-1}=0$, and $G$ is either bipartite or a generalized Odd graph. Conversely, when $G$ is bipartite and distance-regular, we have that $\gamma_d=c_d=\lambda_0$ (the degree of $G$) and $\lambda_0+\cdots+\lambda_d=0$ (for example by Lemma \ref{properties-preintersectionnumbers}$(ii)$) and so the condition \eqref{value-of-cd} is tight.
Moreover, when $G$ is a generalized Odd graph, with odd-girth $2d+1$, then $\alpha_d=a_d=\lambda_0+\cdots +\lambda_d$ (this is again a consequence of Lemma \ref{properties-preintersectionnumbers}$(ii)$), and equality in \eqref{value-of-cd} follows from $\alpha_d+\gamma_d=\lambda_0$ (Lemma \ref{ortho-pol}$(ii)$).
\end{proof}

Note that, as a consequence of Theorem \ref{theo-girth+}, the assumptions of Theorem \ref{theo-book}$(iii)$ seem to be quite strong.

By using Proposition \ref{theo:basic}$(i)$, we can also obtain some related results under the assumption that $g\ge 2d-2$.
With this aim, let $\overline{a_{d-1}c_{d-1}}$ be the average
of the products $a_{d-1}(u,v)c_{d-1}(u,v)$ over all pairs $(u,v)$ at distance $d-1$. If $g\ge 2d-2$, then this number equals the average $\overline{a}_{d-1}^{(d)}$  of walks of length $d$ between vertices at distance $d-1$. Indeed, the number of walks of length $d$ between $u$ and $v$ equals $(\A^d)_{uv}$, and hence
\begin{align*}
\overline{a}_{d-1}^{(d)} & =\frac{\langle \A^{d},\A_{d-1}\rangle}{\|\A_{d-1}\|^2} =\frac{\langle \A^{d-1},\A\A_{d-1}\rangle}{\|\A_{d-1}\|^2}
   =\frac{1}{n\overline{k}_{d-1}}\som (\A^{d-1}\circ \A\A_{d-1}) \\
  & = \frac{1}{n\overline{k}_{d-1}}\sum_{u,v\in V}(\A^{d-1})_{vu}(\A\A_{d-1})_{vu}
  = \frac{1}{n\overline{k}_{d-1}}\sum_{\dist(u,v)=d-1}c_{d-1}(u,v)a_{d-1}(u,v).
\end{align*}
Here we have used that $(\A^{d-1})_{vu}=0$ when $\dist(u,v)> d-1$ and, since $g\ge2d-2$, also when
$\dist(u,v)=d-2$. Moreover, $(\A\A_{d-1})_{vu}=0$ when $\dist(u,v)< d-2$.

\begin{propo}
\label{theo-girth++}
Let $G$ be a regular graph with $d+1$ distinct eigenvalues
and girth $g\ge 2d-2$.
\begin{itemize}
\item[$(i)$]
If  $\alpha_{d-1}< \gamma_d$, then $\overline{a_{d-1}c_{d-1}}\ge \alpha_{d-1}\gamma_{d-1}$, with equality if and only if $G$  is distance-regular,
\item[$(ii)$]
If  $\alpha_{d-1}> \gamma_d$, then $\overline{a_{d-1}c_{d-1}}\le \alpha_{d-1}\gamma_{d-1}$, with equality if and only if $G$  is distance-regular,
\item[$(iii)$]
If $\alpha_{d-1}=\gamma_d$, then $(\A^d)_{uv}= \alpha_{d-1}\gamma_{d-1}$ for all vertices $u$ and $v$ at distance $d-1$.
\end{itemize}
\end{propo}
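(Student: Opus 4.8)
The plan is to reduce all three parts to a single affine identity expressing $(\A^d)_{uv}$ in terms of $c_{d-1}(u,v)$ for vertices $u,v$ at distance $d-1$, and then to average this identity. First I would record the consequences of the girth hypothesis. Since $g\ge 2d-2$, Lemma \ref{lema-girth} together with Corollary \ref{cor:girth} gives that $G$ is $(d-2)$-partially distance-regular with $c_i=\gamma_i=1$ and $a_i=\alpha_i=0$ for $i=1,\ldots,d-2$; in particular $a_{d-2}$ is well-defined. Moreover, the general bound $g\le 2D+1$ forces $D\ge d-1$, so that $\A_{d-1}\neq\vec0$ and $m=d-1$ is an admissible index in Propositions \ref{theo:excesspdr} and \ref{theo:basic}.

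Next I would establish the pointwise identity. For $u,v$ at distance $d-1$, equation \eqref{bound-A^d} reads $(\A^d)_{uv}=(\lambda_1+\cdots+\lambda_d)c_{d-1}(u,v)+\gamma_{d-1}\gamma_d$. Because $\alpha_0=\cdots=\alpha_{d-2}=0$ in the present situation, the same computation that produced \eqref{gamma-alpha} gives $\lambda_1+\cdots+\lambda_d=\alpha_{d-1}-\gamma_d$, and substituting yields
$$
(\A^d)_{uv}=(\alpha_{d-1}-\gamma_d)\,c_{d-1}(u,v)+\gamma_{d-1}\gamma_d .
$$
Part $(iii)$ is then immediate: when $\alpha_{d-1}=\gamma_d$ the first term vanishes, so $(\A^d)_{uv}=\gamma_{d-1}\gamma_d=\alpha_{d-1}\gamma_{d-1}$ for every such pair.

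For $(i)$ and $(ii)$ I would average this identity over all ordered pairs at distance $d-1$. By the computation preceding the statement, $\overline{a_{d-1}c_{d-1}}=\overline{a}_{d-1}^{(d)}$ is exactly the average of $(\A^d)_{uv}$ over those pairs, and averaging $c_{d-1}(u,v)$ gives $\overline{c}_{d-1}$. Hence $\overline{a_{d-1}c_{d-1}}=(\alpha_{d-1}-\gamma_d)\overline{c}_{d-1}+\gamma_{d-1}\gamma_d$, and a one-line regrouping produces the key factorization
$$
\overline{a_{d-1}c_{d-1}}-\alpha_{d-1}\gamma_{d-1}=(\alpha_{d-1}-\gamma_d)(\overline{c}_{d-1}-\gamma_{d-1}).
$$
To read off the signs I would control the second factor: since $a_{d-2}$ is well-defined and $G$ is $(d-2)$-partially distance-regular, Lemma \ref{lema:basic00}$(iv)$ (with $m=d-1$) gives $\gamma_{d-1}=\overline{c_{d-1}^2}/\overline{c}_{d-1}$, and combining this with $\overline{c_{d-1}^2}\ge(\overline{c}_{d-1})^2$ shows $\overline{c}_{d-1}-\gamma_{d-1}\le 0$, with equality precisely when $c_{d-1}$ is well-defined. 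Thus if $\alpha_{d-1}<\gamma_d$ the product is a product of a negative and a nonpositive number, giving $\overline{a_{d-1}c_{d-1}}\ge\alpha_{d-1}\gamma_{d-1}$ as in $(i)$, and if $\alpha_{d-1}>\gamma_d$ the product is nonpositive, giving $(ii)$.

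For the equality cases I would argue that, since the first factor is nonzero in $(i)$ and $(ii)$, equality holds iff $\overline{c}_{d-1}=\gamma_{d-1}$, i.e.\ iff $c_{d-1}$ is well-defined. If $c_{d-1}$ is well-defined then $\overline{c}_{d-1}=\gamma_{d-1}$, so Proposition \ref{theo:basic}$(i)$ upgrades $G$ to $(d-1)$-partial distance-regularity and Proposition \ref{propo-pdrg=drg}$(i)$ concludes that $G$ is distance-regular; the converse is trivial. The only genuinely delicate point, and the one I would be most careful about, is the direction of the inequality $\gamma_{d-1}\ge\overline{c}_{d-1}$: it runs the "opposite" way to the hypothesis $\overline{c}_m\ge\gamma_m$ of Proposition \ref{theo:basic}$(i)$, and it is exactly this sign that forces the two regimes $\alpha_{d-1}\lessgtr\gamma_d$ to reverse the inequality between $\overline{a_{d-1}c_{d-1}}$ and $\alpha_{d-1}\gamma_{d-1}$.
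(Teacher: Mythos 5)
Your proof is correct, and it shares the paper's skeleton: the same pointwise identity $(\A^d)_{uv}=(\alpha_{d-1}-\gamma_d)\,c_{d-1}(u,v)+\gamma_{d-1}\gamma_d$ obtained from \eqref{bound-A^d} and \eqref{gamma-alpha}, the same averaging via $\overline{a_{d-1}c_{d-1}}=\overline{a}_{d-1}^{(d)}$, the same immediate treatment of $(iii)$, and the same appeal to Propositions \ref{theo:basic}$(i)$ and \ref{propo-pdrg=drg}$(i)$ in the equality case. Where you genuinely diverge is in how the inequality itself is extracted from the averaged identity. The paper never determines the sign of $\overline{c}_{d-1}-\gamma_{d-1}$ in advance: in case $(i)$ it assumes $\overline{a_{d-1}c_{d-1}}\le\alpha_{d-1}\gamma_{d-1}$, solves the averaged identity to get $\overline{c}_{d-1}\ge\gamma_{d-1}$, concludes by Proposition \ref{theo:basic}$(i)$ that $G$ is distance-regular, and hence that equality must in fact hold --- a bootstrap that proves the inequality and characterizes equality in one stroke. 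You instead prove unconditionally that $\overline{c}_{d-1}\le\gamma_{d-1}$ by invoking Lemma \ref{lema:basic00}$(iv)$ with $m=d-1$ (which needs the extra, and correct, observation that $a_{d-2}$ is well-defined, since the odd-girth is at least $2d-1$) together with $\overline{c_{d-1}^2}\ge(\overline{c}_{d-1})^2$, and then read the signs off the factorization $\overline{a_{d-1}c_{d-1}}-\alpha_{d-1}\gamma_{d-1}=(\alpha_{d-1}-\gamma_d)(\overline{c}_{d-1}-\gamma_{d-1})$. Your route costs one more lemma and one more hypothesis check, but it buys a structural explanation the paper leaves implicit: the second factor is, up to the positive normalization $\overline{c}_{d-1}$, minus the variance of $c_{d-1}(u,v)$, so it is always nonpositive and vanishes exactly when $c_{d-1}$ is well-defined --- which is precisely why the two regimes $\alpha_{d-1}\lessgtr\gamma_d$ flip the inequality. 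Both arguments are sound; the paper's is more economical, yours is more transparent.
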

\begin{proof}

If $G$ is distance-regular, then equality in $(i)$ and $(ii)$ is clear since $a_{d-1}=\alpha_{d-1}$ and $c_{d-1}=\gamma_{d-1}$.

On the other hand, \eqref{bound-A^d} and \eqref{gamma-alpha} imply that if $u,v$ are two vertices at distance $d-1$, then
\begin{equation}
\label{bound-A^d(2)}
(\alpha_{d-1}-\gamma_d) c_{d-1}(u,v)+\gamma_{d-1}\gamma_d
=(\A^d)_{uv}.
\end{equation}
Thus, by taking averages over all vertices $u,v$ at distance $d-1$, we have that
$$
(\alpha_{d-1}-\gamma_d)\overline{c}_{d-1}+\gamma_{d-1}\gamma_d
=\overline{a_{d-1}c_{d-1}}.
$$
Suppose now that $\alpha_{d-1}<\gamma_d$. To prove $(i)$, let us now assume that $\overline{a_{d-1}c_{d-1}}\le \alpha_{d-1}\gamma_{d-1}$, and aim to prove equality and that $G$ is distance-regular.
First, we obtain that
\begin{equation*}
\label{mean-c(d-1)=gamma(d-1)}
\overline{c}_{d-1}=\frac{\gamma_{d-1}\gamma_d-\overline{a_{d-1}c_{d-1}}}
{\gamma_d-\alpha_{d-1}}
\ge  \frac{\gamma_{d-1}\gamma_d-\alpha_{d-1}\gamma_{d-1}}{\gamma_d-\alpha_{d-1}}
=\gamma_{d-1}.
\end{equation*}
Then, by Proposition \ref{theo:basic}$(i)$, $G$ is $(d-1)$-partially distance-regular, and the result follows from Proposition \ref{propo-pdrg=drg}$(i)$. The proof of $(ii)$ is similar.

Finally, if the hypothesis in $(iii)$ holds, then \eqref{bound-A^d(2)} gives
$(\A^d)_{uv} =\gamma_{d-1}\gamma_d=\gamma_{d-1}\alpha_{d-1}$
for every pair of vertices $u,v$ at distance $d-1$, as claimed.
\end{proof}

Note that in Proposition \ref{theo-girth++}$(iii)$, it remains open whether the graph must be distance-regular or not. In fact, it is not easy to find graphs with girth $g\ge 2d-2$ satisfying $\alpha_{d-1}=\gamma_d$. Such an example is the Perkel graph \cite{p79} (see also \cite[\S~13.3]{bcn89}), which is a distance-regular graph  with $n=57$ vertices, diameter $D=3$, intersection array $\{b_0,b_1,b_2;c_1,c_2,c_3\}=\{6,5,2;1,1,3\}$, and  spectrum $\{6^{1}, ((3+\sqrt{5})/2)^{18}, ((3-\sqrt{5})/2)^{18}, -3^{20}\}$.  Note that
$\alpha_{2}=\gamma_{3}=3$, as required in the case $(iii)$ of the above result.
Moreover, since $\alpha_1=0$ and $\gamma_2=1$, it has girth $g=5=2d-1$,
so it also satisfies the conditions of Theorem \ref{theo-girth}$(i)$, and hence any graph with the same spectrum is distance-regular. In fact, it is known that this graph is determined by the spectrum, see \cite{vdhks06}.

Another---putative---graph suggests that the graphs in this case need not be distance-regular. It is the first relation in a putative $3$-class association scheme on $81$ vertices, the parameters of which occur on top of p.~102 in the list of \cite{vd99} (with the second relation being the Brouwer-Haemers graph). The spectrum is $\{10^1,1^{20},(-\frac12+\frac12 \sqrt{45})^{30},(-\frac12-\frac12 \sqrt{45})^{30}\}$, and it follows that the (relevant) preintersection numbers are $\alpha_1=0$ (so $g\geq 2d-2$), $\gamma_2=\frac{13}{9}$, and $\alpha_2=\gamma_3=\frac{99}{13}$. Thus, if there exists a graph with this spectrum, then it will not be distance-regular. Now if you consider the graph in the association scheme, then for both types of vertices at distance 2 from a fixed vertex (the type depending on $c_2(u,v)$ being 1 or 2), you can count the number of walks of length 3 using the intersection numbers of the scheme, and indeed in both cases this number equals $\alpha_2\gamma_2=11$. However, note that $a_2(u,v)c_2(u,v)$ is either $9$ or $16$, depending on the relation between $u$ and $v$.

\noindent{\large \bf Acknowledgments.} The authors sincerely acknowledge the relevant contributions of Cristina Dalf\'o at an early stage of this work.
They also thank Andries Brouwer and the anonymous referees for their valuable comments and suggestions, which helped to improve this paper.
This research is supported by the
{\em Ministerio de Ciencia e Innovaci\'on}, Spain, and the {\em European Regional
Development Fund} under project MTM2011-28800-C02-01 (M.A.F.), the {\em Catalan Research
Council} under project 2009SGR1387 (M.A.F.), and by the Netherlands Organization of Scientific Research (NWO) (A.A.).

{\small
}

\end{document}